\documentclass[12pt,reqno]{amsart}

\usepackage{verbatim}

\pagestyle{myheadings}
\textheight=7 true in \textwidth=6 true in  
%\hoffset=-0.8true in
\hoffset=-0.5true in

%\usepackage[
%left=5cm,
%right=5cm,
%top=3.5cm,
%bottom=3cm,
%%includeheadfoot
%]{geometry}

\usepackage{amsthm,amssymb,amstext,amscd,amsfonts,amsbsy,amsxtra,latexsym,amsmath,xcolor,mathrsfs,fancybox}
\usepackage[english]{babel}
\usepackage[latin1]{inputenc}
\usepackage{cancel}
\usepackage[draft]{hyperref}
\usepackage{comment,enumitem}
\usepackage{mdframed}
\usepackage{bbm,bm}
%\allowdisplaybreaks
%\usepackage{refcheck}

%\oddsidemargin = 0cm \evensidemargin = 0cm \textwidth = 6.5in

\newcommand{\field}[1]{\mathbb{#1}}
\newcommand{\N}{\field{N}}
\newcommand{\Z}{\field{Z}}

\newcommand{\C}{\field{C}}

\renewcommand{\H}{\mathbb{H}}

\newcommand{\SL}{\operatorname{SL}}

\newcommand{\re}{\operatorname{Re}}

\newcommand{\bea}{\begin{eqnarray}}
\newcommand{\eea}{\end{eqnarray}}
\newcommand{\be}{\begin {equation}}
\newcommand{\ee}{\end{equation}}

\newcommand{\z}{\mathfrak{z}}
\newcommand{\x}{\mathbbm{x}}
\newcommand{\y}{\mathbbm{y}}

\numberwithin{equation}{section}
\newtheorem{theorem}{Theorem}
\numberwithin{theorem}{section}
\newtheorem{lemma}[theorem]{Lemma}
\newtheorem{proposition}[theorem]{Proposition}
\newtheorem{corollary}[theorem]{Corollary}
%\numberwithin{corollary}{section}

\theoremstyle{remark}

\newtheorem*{remark}{Remark}

\theoremstyle{definition}
\newtheorem*{definition}{Definition}

\begin{document}
\title[The degenerate parts of spaces of meromorphic cusp forms]{The degenerate parts of spaces of meromorphic cusp forms under a regularized inner product}
\author{Kathrin Bringmann}
\address{Department of Mathematics and Computer Science\\Division of Mathematics\\University of
Cologne\\ Weyertal 86-90 \\ 50931 Cologne \\Germany}
\email{kbringma@math.uni-koeln.de}
\author{Ben Kane}
\address{Department of Mathematics\\ University of Hong Kong\\ Pokfulam, Hong Kong}
\email{bkane@hku.hk}
\date{\today}
\thanks{The research of the first author is supported by the Alfried Krupp Prize for Young University Teachers of the Krupp foundation. The research of the second author was supported by grants from the Research Grants Council of the Hong Kong SAR, China (project numbers HKU 17316416, 17301317, and 17303618).}
\maketitle

\section{Introduction and statement of results}\label{sec:intro}
In this paper, we investigate inner products for modular forms. Petersson defined an inner product $\langle\cdot,\cdot\rangle$ between two holomorphic modular forms $f$ and $g$ that converges whenever the product $fg$ vanishes at every cusp. 
The Petersson inner product, when restricted to the space $S_{2k}$ of cusp forms of weight $2k\in2\N$, is \emph{positive definite}, i.e., for $f\in S_{2k}$,
\[
\|f\|^2:=\left<f,f\right>\geq 0\quad\text{ and }\quad \|f\|^2=0\Leftrightarrow f=0.
\]
For an inner product $[\cdot,\cdot]$, we say that $f$ and $g$ are \begin{it}orthogonal\end{it} if $[f,g]=0$ and one calls $f\neq 0$ \begin{it}isotropic\end{it} if it is orthogonal to itself. Petersson's study in \cite{PeMet} revealed that $S_{2k}$ has no isotropic elements and moreover contains an orthonormal basis with respect to the inner product. 

There are many so-called regularizations of the Petersson inner product, extending it to a bigger space $M_{2k}$ of weight $2k$ modular forms. Zagier \cite{ZagierNotRapid} included Eisenstein series $E_{2k}$ and proved that for $k$ even there are isotropic elements that lie in the space of weight $2k$ holomorphic modular forms and moreover elements $f\in M_{2k}$ with $\left\|f\right\|^2 < 0$. Petersson \cite{Pe2} defined a regularization via  Cauchy principal integrals and this idea was independently rediscovered and extended by Harvey and Moore \cite{HM} and Borcherds \cite{Borcherds} to a regularization for \emph{weakly holomorphic modular forms of weight $2k$}, i.e., those modular forms which are holomorphic on the upper half-plane but which may grow exponentially towards $i\infty$. A regularized inner product for the space $\mathbb{S}_{2k}$ of \begin{it}meromorphic cusp forms\end{it}, i.e., those meromorphic modular forms of weight $2k$ which vanish like cusp forms towards $i\infty$, was defined in \cite{BKvP}.

In this paper we consider an inner product on natural subspaces that are related the elliptic expansions of a meromorphic modular form around points $\z\in\H$. Define 
\[
\mathbb{S}_{2k}^{\z}:=\left\{ f\in \mathbb{S}_{2k}: \text{ the only possible pole of $f$ in $\SL_2(\Z)\backslash \H$ lies at $\z$}\right\}.
\]
Writing the elliptic expansion of $f\in \mathbb{S}_{2k}^{\z}$ around $\z\in\H$ as (see e.g. \cite[(7)]{Pe2})
\begin{equation}\label{eqn:fellexp}
f(z)=(z-\overline{\z})^{-2k}\sum_{n\gg -\infty}c_{f,\z}(n)X_{\z}^n(z),
\end{equation}
with $X_{\z}(z):=\frac{z-\z}{z-\overline{\z}}$, a short calculation shows that precisely the terms $1-2k\leq n\leq -1$ contribute to $\operatorname{Res}_{z=\z} f(z)$. Interpreting the residue as an integral via the Residue Theorem, there is also a well-defined residue at cusps, and the constant term in the Fourier expansion of $f$ is the only term that contributes to the residue of $f$ at $i\infty$. 
We hence 
\rm
draw a parallel between the terms $1-2k\leq n\leq -1$ in the elliptic expansion and the constant term in the Fourier expansion. Hence the projection of weakly holomorphic modular forms to $\C E_{2k}$ is completely determined by the residue. Paralelling this for meromorphic cusp forms, set
\begin{align*}
\mathbb{E}_{2k}^{\z}&:=\left\{f \in \mathbb{S}_{2k}^{\z}: c_{f,\z}(n)=0\ \forall n\leq -2k\text{ and }\left<f,g\right>=0\ \forall g\in S_{2k}\right\},\\
\mathbb{D}_{2k}^{\z}&:=\left\{f \in \mathbb{S}_{2k}^{\z}: {c_{f,\z}}(n)=0\ \forall 1-2k\leq n\leq -1\text{ and }\left<f,g\right>=0\ \forall g\in S_{2k}\right\}.
\end{align*}
For any element in $\mathbb{S}_{2k}^{\z}$ we can subtract an element of $\mathbb{D}_{2k}^{\z}$ plus an element of $\mathbb{E}_{2k}^{\z}$ to cancel its principal part in $\z$, giving a cusp form. This yields the decomposition 
\begin{equation}\label{eqn:S2ksplitting}
\mathbb{S}_{2k}^{\z}=S_{2k} \perp \left(\mathbb{E}_{2k}^{\z}\oplus \mathbb{D}_{2k}^{\z}\right).
\end{equation}
Here and throughout we write $\perp$ for an orthogonal decomposition and $\oplus$ for direct sums. We let $\mathbb{D}_{2k}$ denote the space spanned by all $\mathbb{D}_{2k}^{\z}$ with $\z\in\H$ and $\mathbb{E}_{2k}$ be the space spanned by all $\mathbb{E}_{2k}^{\z}$ with $\z\in\H$. Similarly, throughout the paper we define subspaces with singularities only possibly occurring at $\z\in\H$ and omit $\z$ in the notation for the space spanned by the union of all such subspaces with $\z\in\H$.

So-called polar harmonic cusp forms appeared in \cite{BKvP} when evaluating the inner product between certain meromorphic cusp forms arising from positive-definite quadratic forms. These polar harmonic Maass forms are pre-images of meromorphic cusp forms under $\xi_{\kappa}:=2iy^{\kappa} \overline{\frac{\partial}{\partial\overline{z}}}$ (with $z=x+iy\in \H$, $\kappa=2-2k$).

 By Lemma \ref{lem:PPsiRel} (2), (3), if $\xi_{2-2k}(F)\in S_{2k}$, then $D^{2k-1}(F)\in \mathbb{D}_{2k}$, with $D:=\frac{1}{2\pi i }\frac{\partial}{\partial z}$, and $D^{2k-1}$ is surjective onto $\mathbb{D}_{2k}$. The following theorem therefore relates the regularized inner product on $\mathbb{D}_{2k}$ with the classical inner product on $S_{2k}$.
\begin{theorem}\label{thm:xiDinner}
If $F$ and $G$ are polar harmonic cusp forms of weight $2-2k \, (k\in \mathbb N_{\geq 2})$ for which $\xi_{2-2k}(F),\xi_{2-2k}(G)\in \mathbb{D}_{2k}\perp S_{2k}$, then
\[
\left<\xi_{2-2k}(F),\xi_{2-2k}(G)\right>=-\frac{(4\pi)^{4k-2}}{(2k-2)!^2} \left<D^{2k-1}(G),D^{2k-1}(F)\right>.
\]
\end{theorem}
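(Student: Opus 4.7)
The plan is to reduce both pairings to the same residue-type contour integral at $\z$ using Stokes' theorem, and then match the resulting coefficient sums by applying Lemma~\ref{lem:PPsiRel}(2),(3). Starting from
\[
\left<\xi_{2-2k}(F),\xi_{2-2k}(G)\right>=\lim_{T\to\infty,\,\varepsilon\to 0}\int_{\mathcal F^*}\xi_{2-2k}(F)(z)\,\overline{\xi_{2-2k}(G)(z)}\,y^{2k-2}\,dx\,dy,
\]
where $\mathcal F^*$ is the standard fundamental domain for $\SL_2(\Z)\backslash\H$ truncated at height $T$ near $i\infty$ and with $\varepsilon$-disks around the $\SL_2(\Z)$-orbit of $\z$ removed, I substitute $\overline{\xi_{2-2k}(G)(z)}=-2iy^{2-2k}\partial_{\bar z}G(z)$. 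The factor $y^{2k-2}$ cancels, and since $\xi_{2-2k}(F)$ is meromorphic on $\mathcal F^*$, the integrand rewrites as $-2i\,\partial_{\bar z}(\xi_{2-2k}(F)\cdot G)$.

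Applying Stokes' theorem converts the area integral into a contour integral around $\partial\mathcal F^*$. The paired geodesic sides cancel because $\xi_{2-2k}(F)\,G\,dz$ is a weight-zero, hence $\SL_2(\Z)$-invariant, $1$-form; the horizontal segment at height $T$ contributes nothing as $T\to\infty$ because $\xi_{2-2k}(F)\in\mathbb S_{2k}$ is cuspidal at $i\infty$ by hypothesis while $G$ has at most polynomial growth there. Only the small clockwise circle $C_\varepsilon$ around $\z$ survives, yielding
\[
\left<\xi_{2-2k}(F),\xi_{2-2k}(G)\right>=-\lim_{\varepsilon\to 0}\oint_{C_\varepsilon}\xi_{2-2k}(F)(z)\,G(z)\,dz.
\]
This residue is evaluated by substituting the elliptic expansion \eqref{eqn:fellexp} for $\xi_{2-2k}(F)$ together with the holomorphic-plus-non-holomorphic elliptic expansion of the polar harmonic Maass form $G$ around $\z$. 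Because $\xi_{2-2k}(F)\in\mathbb{D}_{2k}^{\z}$, the coefficients $c_{\xi_{2-2k}(F),\z}(n)$ vanish for $1-2k\leq n\leq -1$, so only the deep coefficients $n\leq -2k$ pair against corresponding coefficients in the expansion of $G$ to produce a nonzero residue.

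Running the same Stokes-plus-residue computation on $\left<D^{2k-1}(G),D^{2k-1}(F)\right>$ (using that $D^{2k-1}(F),D^{2k-1}(G)\in\mathbb{D}_{2k}$ and expressing one factor as $\xi$ of an Eichler integral of the other, so that Stokes again applies) produces an analogous bilinear expression in the elliptic coefficients of $D^{2k-1}(F)$ and $D^{2k-1}(G)$. The explicit coefficient relations in Lemma~\ref{lem:PPsiRel}(2),(3), which link the elliptic coefficients of $F$ and $G$ to those of $D^{2k-1}(F)$ and $\xi_{2-2k}(G)$ respectively, identify the two residue sums up to the constant $-(4\pi)^{4k-2}/(2k-2)!^2$.

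The main obstacle is the bookkeeping of constants. The powers of $4\pi$ arise from $\partial_{\bar z}$ hitting the non-holomorphic basis functions of the elliptic expansion (each differentiation of the $e^{-4\pi ny}$-type factors contributes a $4\pi$), the factorials $(2k-2)!$ enter through the Bol operator $D^{2k-1}=(2\pi i)^{-(2k-1)}\partial_z^{2k-1}$ acting on the basis $X_{\z}^n$, and the exponent $4k-2=2(2k-1)$ in the final constant reflects the fact that one such factor is produced on each side. The overall minus sign comes from the clockwise orientation of $C_\varepsilon$ combined with the sign in $\overline{\xi_{2-2k}(G)}=-2iy^{2-2k}\partial_{\bar z}G$, and the swap of arguments $(F,G)\leftrightarrow(G,F)$ between the two sides is natural since complex conjugation in the Petersson inner product interchanges the roles of the holomorphic and non-holomorphic pieces.
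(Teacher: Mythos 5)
Your overall strategy (unfold $\overline{\xi_{2-2k}(G)}$, write the integrand as $\partial_{\bar z}(\xi_{2-2k}(F)\,G)$, apply Stokes, and read off elliptic coefficients from boundary circles) is the right general mechanism, but as written it has two genuine gaps. First, your opening identity equating the regularized inner product with a naive limit over a truncated, $\varepsilon$-excised fundamental domain is unjustified and in general false: the inner product in this paper is defined by \eqref{eqn:OurReg}, i.e.\ by inserting $H_{\bm{s}}$ and taking the constant term of a meromorphic continuation at $\bm{s}=\bm{0}$. When $\xi_{2-2k}(F)$ and $\xi_{2-2k}(G)$ have poles at the same point (e.g.\ $F=G$ with $\xi_{2-2k}(F)\in\mathbb{D}_{2k}^{\z}$), the area integral diverges, and even after your Stokes maneuver the surviving circle integral does not converge: the non-meromorphic part of $G$ contributes terms $c_{G,\z}^-(n)\beta_0\!\left(1-\varepsilon^2;2k-1,-n\right)$ with $n\geq 2k-1$, which blow up like $\varepsilon^{-2n}$ as $\varepsilon\to 0$ when paired against the deep pole of $\xi_{2-2k}(F)$. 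Taming exactly these terms is what the $r_{\z}^{2s}$-regularization and the constant-term extraction are for; carrying that computation out is the content of Lemma \ref{lem:innermero} (the ``lengthy calculation'' following the proof of Theorem 6.1 of \cite{BKvP}), which your sketch in effect assumes rather than proves.

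Second, the passage to $\left<D^{2k-1}(G),D^{2k-1}(F)\right>$ is not actually carried out. ``Expressing one factor as $\xi$ of an Eichler integral of the other'' is not a proof step; the device that works is the flipping operator, via Lemma \ref{lem:flip} (2), which rewrites $D^{2k-1}$ as $\frac{(2k-2)!}{(4\pi)^{2k-1}}\xi_{2-2k}\circ\mathfrak{F}_{2-2k}$ and already explains the constant $\frac{(4\pi)^{4k-2}}{(2k-2)!^2}$ and the swap of arguments. Moreover, Lemma \ref{lem:PPsiRel} (2), (3), which you invoke for ``explicit coefficient relations,'' contains no such relations; the coefficient duality you need is Lemma \ref{lem:xiDelliptic} (3), and it is stated only for the Poincar\'e series $\mathbb{P}_{2-2k,m}^{\z}$ -- which is why the paper first reduces to $F=\mathbb{P}_{2-2k,n}^{\z_1}$, $G=\mathbb{P}_{2-2k,m}^{\z_2}$, restates the claim as the symmetric identity \eqref{eqn:PsiInnertoshow}, disposes of the cases $m\leq -2k$ by Petersson's orthogonality, and then compares Lemma \ref{lem:innermero} with the Petersson coefficient formula (Lemma \ref{lem:PeterssonCoeff}) through Lemma \ref{lem:xiDelliptic} (3). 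Finally, your accounting of the constants via ``$e^{-4\pi ny}$-type factors'' conflates Fourier expansions with elliptic expansions (the latter involve $X_{\z}^n$ and incomplete beta functions, no such exponentials), so the claimed origin of the powers of $4\pi$ is not correct even heuristically.
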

Theorem \ref{thm:xiDinner} leads to a number of corollaries that help to better understand the inner product on $\mathbb{S}_{2k}$. 
We first use it to understand the \begin{it}degenerate part\end{it} of $\mathbb{D}_{2k}$, i.e., the subspace of $\mathbb{D}_{2k}$ that is orthogonal to all of $\mathbb{D}_{2k}$; 
see Corollary \ref{cor:Dperp} below for a more precise version. 
\begin{corollary}\label{cor:DperpIntro}
 The degenerate part of $\mathbb{D}_{2k}$ is $D^{2k-1}(\mathbb{S}_{2-2k})$. 
\end{corollary}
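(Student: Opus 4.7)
The plan is to use Theorem \ref{thm:xiDinner} to transfer the orthogonality condition defining the degenerate part of $\mathbb{D}_{2k}$ into an orthogonality condition under the classical Petersson inner product on $S_{2k}$, where positive definiteness forces vanishing. The key auxiliary input, already cited in the excerpt before Theorem \ref{thm:xiDinner}, is that $D^{2k-1}$ maps the class of polar harmonic cusp forms $F$ with $\xi_{2-2k}(F) \in S_{2k}$ surjectively onto $\mathbb{D}_{2k}$; this allows me to represent every element of $\mathbb{D}_{2k}$ in a form to which Theorem \ref{thm:xiDinner} can be applied.

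For the containment $D^{2k-1}(\mathbb{S}_{2-2k}) \subseteq$ (degenerate part), start with $F \in \mathbb{S}_{2-2k}$. Since $F$ is meromorphic in $z$, we have $\xi_{2-2k}(F) = 0 \in S_{2k}$, so in particular $D^{2k-1}(F) \in \mathbb{D}_{2k}$ by the cited lemma. For an arbitrary $h \in \mathbb{D}_{2k}$, write $h = D^{2k-1}(G)$ with $\xi_{2-2k}(G) \in S_{2k}$ using the aforementioned surjectivity. Then Theorem \ref{thm:xiDinner} yields
\[
\left\langle h, D^{2k-1}(F) \right\rangle = -\frac{(4\pi)^{4k-2}}{(2k-2)!^2} \left\langle \xi_{2-2k}(F), \xi_{2-2k}(G) \right\rangle = 0,
\]
so $D^{2k-1}(F)$ is orthogonal to all of $\mathbb{D}_{2k}$.

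For the reverse containment, let $h \in \mathbb{D}_{2k}$ lie in the degenerate part, and write $h = D^{2k-1}(F)$ with $\xi_{2-2k}(F) \in S_{2k}$. For every polar harmonic cusp form $G$ with $\xi_{2-2k}(G) \in S_{2k}$, Theorem \ref{thm:xiDinner} together with degeneracy of $h$ gives $\left\langle \xi_{2-2k}(F), \xi_{2-2k}(G) \right\rangle = 0$. Invoking the standard surjectivity of $\xi_{2-2k}$ from this class onto $S_{2k}$ (via Hejhal--Poincar\'e series or Eichler integrals with prescribed shadow), $\xi_{2-2k}(F)$ is orthogonal to all of $S_{2k}$ under the Petersson inner product. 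Positive definiteness of that inner product then forces $\xi_{2-2k}(F) = 0$. Hence $F$ is a meromorphic modular form of weight $2-2k$, and since polar harmonic cusp forms vanish at $i\infty$, we conclude $F \in \mathbb{S}_{2-2k}$ and $h \in D^{2k-1}(\mathbb{S}_{2-2k})$.

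The main obstacle is justifying the surjectivity of $\xi_{2-2k}$ onto $S_{2k}$ from the precise class of polar harmonic cusp forms eligible in Theorem \ref{thm:xiDinner}; one has to construct a $\xi$-preimage of each cusp form whose only singularity in $\H$ is at a prescribed $\z$, matching the definition of $\mathbb{D}_{2k}^{\z}$. Once this compatibility is in place, the corollary is a formal consequence of Theorem \ref{thm:xiDinner} and the positive definiteness of the Petersson inner product on $S_{2k}$.
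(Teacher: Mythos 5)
Your proposal is correct and follows essentially the same route as the paper's proof of Corollary \ref{cor:Dperp}: represent elements of $\mathbb{D}_{2k}$ as $D^{2k-1}$-images of forms in $\mathscr{H}_{2-2k}^{\operatorname{cusp}}$ (Lemma \ref{lem:DXicusp} (2)), use Theorem \ref{thm:xiDinner} to translate degeneracy into orthogonality of $\xi_{2-2k}(F)$ to all of $S_{2k}$ via the surjectivity of $\xi_{2-2k}$ (Lemma \ref{lem:DXicusp} (1)), and conclude $\xi_{2-2k}(F)=0$, i.e.\ $F\in\mathbb{S}_{2-2k}$, by positive definiteness; the surjectivity you flag as the ``main obstacle'' is exactly what the paper supplies in Lemma \ref{lem:DXicusp}. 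The only blemish is the inverted constant $-\frac{(4\pi)^{4k-2}}{(2k-2)!^2}$ in your displayed equation, which is immaterial since the right-hand side vanishes.
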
 
 Corollary \ref{cor:DperpIntro} reduces the understanding of the inner product on $\mathbb{D}_{2k}$ to its behaviour on $\mathbb{D}_{2k}$  $\pmod{D^{2k-1}(\mathbb{S}_{2-2k})}$. We compute the inner product on this quotient space in the following corollary; a more formal version can be found in Corollary \ref{cor:DperpFD} below.
\begin{corollary}\label{cor:DperpFDIntro}
After factoring out by $D^{2k-1}(\mathbb{S}_{2-2k})$, the inner product on $\mathbb{D}_{2k}$ coincides with the inner product on $S_{2k}$. In particular, it is positive-definite.
\end{corollary}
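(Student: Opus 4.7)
The plan is to combine Corollary~\ref{cor:DperpIntro} with Theorem~\ref{thm:xiDinner} to identify the quotient $\mathbb{D}_{2k}/D^{2k-1}(\mathbb{S}_{2-2k})$ with $S_{2k}$ as Hermitian spaces. Corollary~\ref{cor:DperpIntro} already ensures that the regularized inner product descends to a non-degenerate pairing on the quotient, so the task reduces to exhibiting a linear bijection $\Psi\colon S_{2k}\to\mathbb{D}_{2k}/D^{2k-1}(\mathbb{S}_{2-2k})$ that respects the pairings.

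I would define $\Psi(g):=[D^{2k-1}(F)]$, where $F$ is any polar harmonic cusp form of weight $2-2k$ with $\xi_{2-2k}(F)=g$. The map is well-defined: any two such lifts differ by an element of $\ker(\xi_{2-2k})$, i.e.\ a meromorphic form in $\mathbb{S}_{2-2k}$, so their $D^{2k-1}$-images coincide modulo $D^{2k-1}(\mathbb{S}_{2-2k})$. Surjectivity of $\Psi$ is immediate from Lemma~\ref{lem:PPsiRel}, which furnishes, for every $f\in\mathbb{D}_{2k}$, a polar harmonic cusp form $F$ with $\xi_{2-2k}(F)\in S_{2k}$ and $D^{2k-1}(F)=f$.

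The main obstacle is injectivity. If $\Psi(g)=0$, then a lift $F$ with $\xi_{2-2k}(F)=g$ satisfies $D^{2k-1}(F)=D^{2k-1}(G)$ for some $G\in\mathbb{S}_{2-2k}$; writing $F=G+H$ with $H\in\ker(D^{2k-1})$, one must show $g=\xi_{2-2k}(H)=0$. The plan here is to argue that the kernel of $D^{2k-1}$, restricted to polar harmonic cusp forms whose shadow lies in $S_{2k}$, consists only of the meromorphic elements of $\mathbb{S}_{2-2k}$, which can be verified by a direct comparison of the holomorphic and non-holomorphic parts at $\z$ and at the cusp, exploiting the cuspidal growth condition.

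Once the bijection is established, Theorem~\ref{thm:xiDinner} applied to lifts $F_1,F_2$ of $g_1,g_2\in S_{2k}$ yields
\[
\left<D^{2k-1}(F_1),D^{2k-1}(F_2)\right>=-\frac{(2k-2)!^2}{(4\pi)^{4k-2}}\left<\xi_{2-2k}(F_2),\xi_{2-2k}(F_1)\right>,
\]
exhibiting the pairing on $\mathbb{D}_{2k}/D^{2k-1}(\mathbb{S}_{2-2k})$ as a constant multiple of the Petersson inner product on $S_{2k}$ pulled back along $\Psi$, with the argument swap on the right absorbed by the conjugate-symmetry of the Hermitian form. Definiteness then transfers directly from the classical positive-definiteness of the Petersson inner product on $S_{2k}$, and tracking the sign and conjugation conventions shows that it comes out on the positive side.
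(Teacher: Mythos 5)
Your overall skeleton is the paper's: lift through $\mathscr{H}_{2-2k}^{\operatorname{cusp}}$, use Corollary \ref{cor:Dperp} to identify the degenerate part with $D^{2k-1}(\mathbb{S}_{2-2k})$, and transport the pairing between $\mathbb{D}_{2k}$ and $S_{2k}$ via Theorem \ref{thm:xiDinner}. The difference is that the paper never needs injectivity: it proves definiteness from the norm identity for a single lift of a given $f\in\mathbb{D}_{2k}$ and then transports an orthogonal basis of $S_{2k}$ to the quotient to get the dimension statement, whereas you insist on a bijection $\Psi$ and therefore must prove injectivity. That is exactly where your plan has a genuine gap. If $H=F-G$ with $D^{2k-1}(H)=0$, a ``direct comparison of the holomorphic and non-holomorphic parts at $\z$ and at the cusp'' cannot deliver $\xi_{2-2k}(H)=0$: the two operators see complementary pieces of the expansion ($\xi_{2-2k}$ annihilates the meromorphic part, while the image under $D^{2k-1}$ is governed by the $c^{+}$-coefficients, cf.\ Lemma \ref{lem:xiDelliptic}), so the hypothesis $D^{2k-1}(H)=0$ places no local constraint on the shadow. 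Moreover, your description of the relevant kernel as ``only the meromorphic elements of $\mathbb{S}_{2-2k}$'' is wrong as stated: the kernel of $D^{2k-1}$ contains the flips $\mathfrak{F}_{2-2k}(G)$ of all $G\in\mathbb{S}_{2-2k}$, which are generically non-meromorphic.

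The missing bridge is global, and the paper already provides it. Either use Lemma \ref{lem:flip}: from $D^{2k-1}(H)=0$ and Lemma \ref{lem:flip} (2) one gets $\mathfrak{F}_{2-2k}(H)\in\mathbb{S}_{2-2k}$, and then Lemma \ref{lem:flip} (3) gives $\xi_{2-2k}(H)=\frac{(4\pi)^{2k-1}}{(2k-2)!}D^{2k-1}\left(\mathfrak{F}_{2-2k}(H)\right)\in D^{2k-1}(\mathbb{S}_{2-2k})\subseteq\mathbb{D}_{2k}$, which meets $S_{2k}$ only in $\{0\}$; or, with no new work, argue via the results you already cite: if $\Psi(g)=0$ then $D^{2k-1}(F)$ lies in the degenerate part by Corollary \ref{cor:Dperp}, so Theorem \ref{thm:xiDinner} forces $\left<\xi_{2-2k}(G),g\right>=0$ for all $G\in\mathscr{H}_{2-2k}^{\operatorname{cusp}}$, and surjectivity of $\xi_{2-2k}$ onto $S_{2k}$ (Lemma \ref{lem:DXicusp} (1)) together with positive-definiteness of the Petersson product gives $g=0$. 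One further caveat: Theorem \ref{thm:xiDinner} identifies your pulled-back pairing with the Petersson pairing only up to the explicit constant $-\frac{(2k-2)!^2}{(4\pi)^{4k-2}}$, so the sentence ``tracking the sign and conjugation conventions shows that it comes out on the positive side'' defers precisely the one delicate piece of bookkeeping (the constant as written is negative); the paper's own computation suppresses the same constant when it writes $\left<f,f\right>=\left<\xi_{2-2k}(F),\xi_{2-2k}(F)\right>$, and the substantive conclusions (definiteness and non-degeneracy of the quotient pairing, and the dimension count) are obtained identically in both arguments.
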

Corollary \ref{cor:DperpFDIntro} completely determines the inner product on $\mathbb{D}_{2k}$, so by \eqref{eqn:S2ksplitting} (extended to all $\z$) it remains to determine the relationship of inner products between elements of $\mathbb{E}_{2k}$ and $\mathbb{S}_{2k}$. As a step in this direction, for fixed $\z\in\H$ we investigate the inner product between $\mathbb{S}_{2k}^{\z}$ and other subspaces; 
see Corollary \ref{cor:HeckeExist}.
\begin{corollary}\label{cor:HeckeExistIntro}
For every $\z\in\H$, the space $\mathbb{S}_{2k}^{\z}$ factored out by its degenerate part is finite-dimensional. The space $\mathbb{D}_{2k}$ quotiented out by its subspace orthogonal to $\mathbb{S}_{2k}^{\z}$ is also finite-dimensional.
\end{corollary}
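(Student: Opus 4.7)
The plan is to use the decomposition \eqref{eqn:S2ksplitting} in tandem with the structural results already established as Corollaries \ref{cor:DperpIntro} and \ref{cor:DperpFDIntro}.

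For the first assertion, I would write $\mathbb{S}_{2k}^{\z}=S_{2k} \perp (\mathbb{E}_{2k}^{\z} \oplus \mathbb{D}_{2k}^{\z})$ and analyze each summand modulo the degenerate part. The classical space $S_{2k}$ is finite-dimensional and carries a positive definite Petersson inner product, so it contributes no degenerate elements. For $\mathbb{E}_{2k}^{\z}$, any element $f$ is determined by its principal-part coefficients $c_{f,\z}(n)$ with $1-2k \leq n \leq -1$: if all of these vanish then $f$ is holomorphic and hence a cusp form, which must be zero because $f$ is required to be orthogonal to $S_{2k}$. This gives $\dim \mathbb{E}_{2k}^{\z} \leq 2k-1$. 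Finally, Corollary \ref{cor:DperpFDIntro} identifies the inner product on $\mathbb{D}_{2k}$ modulo $D^{2k-1}(\mathbb{S}_{2-2k})$ with the positive definite Petersson inner product on $S_{2k}$, so $\mathbb{D}_{2k}^{\z}$ modulo its intersection with the degenerate part $D^{2k-1}(\mathbb{S}_{2-2k})$ embeds into the finite-dimensional space $S_{2k}$.

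For the second assertion, I would consider the map $\Phi\colon \mathbb{D}_{2k} \to (\mathbb{S}_{2k}^{\z})^{*}$ sending $f$ to the functional $g \mapsto \langle f,g\rangle$; its kernel is exactly the subspace of $\mathbb{D}_{2k}$ orthogonal to $\mathbb{S}_{2k}^{\z}$, so it suffices to show that $\Phi$ has finite-dimensional image. Decomposing the target along \eqref{eqn:S2ksplitting}, the pairing of $\mathbb{D}_{2k}$ against $S_{2k}$ vanishes by the defining orthogonality condition of $\mathbb{D}_{2k}^{\z}$; the pairing against $\mathbb{E}_{2k}^{\z}$ has rank at most $\dim \mathbb{E}_{2k}^{\z} \leq 2k-1$ by the first part; and the pairing against $\mathbb{D}_{2k}^{\z}$ has rank at most $\dim S_{2k}$, since by Corollary \ref{cor:DperpFDIntro} the entire inner product on $\mathbb{D}_{2k}$ factors through the finite-dimensional space $S_{2k}$.

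The main obstacle will be verifying that the rank bound from Corollary \ref{cor:DperpFDIntro} transfers cleanly from the full pairing on $\mathbb{D}_{2k} \times \mathbb{D}_{2k}$ to the restricted pairing on $\mathbb{D}_{2k} \times \mathbb{D}_{2k}^{\z}$, together with the fact that \eqref{eqn:S2ksplitting} only asserts orthogonality between $S_{2k}$ and $\mathbb{E}_{2k}^{\z} \oplus \mathbb{D}_{2k}^{\z}$, not between $\mathbb{E}_{2k}^{\z}$ and $\mathbb{D}_{2k}^{\z}$ individually. This forces the argument to proceed via the image of $\Phi$ in the dual space rather than by iterating orthogonal splittings.
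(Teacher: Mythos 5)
Your proposal is correct in substance, but it takes a genuinely different route from the paper. The paper first determines the relevant orthogonal complements explicitly (Corollary \ref{cor:Hecke}), using Lemma \ref{lem:innermero} together with the flipping operator to identify $(\mathbb{S}_{2k}^{\z})_{\mathbb{D}_{2k}}^{\perp}$ (and its analogue inside $\mathbb{D}_{2k}^{\z}$) with the space of $f=D^{2k-1}(F)$, $F\in\mathbb{S}_{2-2k}$, whose polynomial part $p_{\mathfrak{F}_{2-2k}(F),\z}$ vanishes; finite-dimensionality then follows by inserting the intermediate quotient by $(\mathbb{D}_{2k})_{\mathbb{D}_{2k}}^{\perp}=D^{2k-1}(\mathbb{S}_{2-2k})$ (Corollaries \ref{cor:Dperp}, \ref{cor:DperpFD}) and bounding the polynomial-part quotient by $2k-1$. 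You bypass Corollary \ref{cor:Hecke} entirely and argue by pure linear algebra: the quotient by the radical (respectively by $(\mathbb{S}_{2k}^{\z})_{\mathbb{D}_{2k}}^{\perp}$) is isomorphic to the image of the map into the dual space, and that image is bounded block-by-block along \eqref{eqn:S2ksplitting} using $\dim_{\C}\mathbb{E}_{2k}^{\z}\leq 2k-1$ and the fact that the pairing on $\mathbb{D}_{2k}$ has rank $\dim_{\C}S_{2k}$. This is leaner, needs no flipping-operator input beyond what is already packaged in Corollaries \ref{cor:Dperp} and \ref{cor:DperpFD}, and yields explicit bounds comparable to the paper's (e.g.\ $\dim_{\C}S_{2k}+2k-1$ for the second quotient); what the paper's route buys in exchange is the precise description of the orthogonal complement itself, not merely its finite codimension. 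Two remarks. First, the ``main obstacle'' you flag is immediate: the kernel of the map $\mathbb{D}_{2k}\to(\mathbb{D}_{2k}^{\z})^{*}$ induced by the pairing contains $(\mathbb{D}_{2k})_{\mathbb{D}_{2k}}^{\perp}$, so its rank is at most $\dim_{\C}(\mathbb{D}_{2k}/(\mathbb{D}_{2k})_{\mathbb{D}_{2k}}^{\perp})=\dim_{\C}S_{2k}$. Second, your first-assertion phrasing (``analyze each summand modulo the degenerate part'', ``$\mathbb{D}_{2k}^{\z}$ modulo its intersection with $D^{2k-1}(\mathbb{S}_{2-2k})$'') must not be read literally, since elements of $D^{2k-1}(\mathbb{S}_{2-2k})$ lying in $\mathbb{D}_{2k}^{\z}$ are in general \emph{not} in the degenerate part of $\mathbb{S}_{2k}^{\z}$: they typically pair nontrivially with $\mathbb{E}_{2k}^{\z}$ (compare Proposition \ref{prop:Eperp}). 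The correct formalization is exactly the dual-space rank argument you use for the second assertion, applied also to the pairing on $\mathbb{S}_{2k}^{\z}$ itself, which you acknowledge in your closing paragraph; with that reading the proof goes through.
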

It is natural to ask if Theorem \ref{thm:xiDinner} holds more generally if the image of the polar harmonic cusp form under $\xi_{2-2k}$ is an arbitrary meromorphic cusp form. It turns out that there is a subspace $\mathscr{H}_{2-2k}^{\mathbb{E}}$ of polar harmonic Maass forms for which $\xi_{2-2k}(F),D^{2k-1}(F)\in\mathbb{E}_{2k}$; 
see \eqref{eqn:Hcuspdef} below for the definition of the subspace and Lemma \ref{lem:DXicusp} (3) for its properties. Moreover, both maps are surjective from $\mathscr{H}_{2-2k}^{\mathbb{E}}$ to $\mathbb{E}_{2k}$.  Hence, since $\mathbb{E}_{2k}$ is orthogonal to $S_{2k}$, if Theorem \ref{thm:xiDinner} would extend to $\mathbb{S}_{2k}$, then $\mathbb{E}_{2k}$ would also be orthogonal to $\mathbb{D}_{2k}$; see Lemma \ref{lem:NoExtend} for further details.
 We next see, however, that this is not the case; see Proposition \ref{prop:Eperp} for a more formal version.
\begin{proposition}\label{prop:EperpIntro}
 Theorem \ref{thm:xiDinner} does not extend to $\mathbb{S}_{2k}$. In particular, the subspace of $\mathbb{S}_{2k}$ orthogonal to all of $\mathbb{E}_{2k}$ is precisely $S_{2k}$.
\end{proposition}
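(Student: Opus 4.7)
I would prove the ``in particular'' statement directly and then deduce the non-extendability as a corollary. The inclusion $S_{2k} \subseteq \mathbb{E}_{2k}^{\perp} \cap \mathbb{S}_{2k}$ is immediate from the definition of $\mathbb{E}_{2k}^{\z}$. For the reverse, let $f \in \mathbb{S}_{2k}$ be orthogonal to $\mathbb{E}_{2k}$. Decomposing $f = s + g$ via \eqref{eqn:S2ksplitting} extended over all $\z$ with $s \in S_{2k}$ and $g \in \mathbb{E}_{2k} \oplus \mathbb{D}_{2k}$, the orthogonality $s \perp \mathbb{E}_{2k}$ is automatic, so the task reduces to showing $g = 0$.

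The approach is to construct enough test elements in $\mathbb{E}_{2k}$ to detect all principal-part coefficients in the elliptic expansions of $g$ at its poles. For each pole $\z$ of $g$ in $\H$ and each $n \in \{1-2k, \ldots, -1\}$, consider the elliptic Poincar\'e series
\[
\Psi^{2k}_{\z,n}(z) := \sum_{\gamma \in \Gamma_{\z}\backslash\Gamma} j(\gamma, z)^{-2k} \frac{X_{\z}^{n}(\gamma z)}{(\gamma z - \overline{\z})^{2k}}
\]
and let $\widetilde{\Psi}_{\z,n}$ be its orthogonal projection onto $S_{2k}^{\perp}$. Only the $\gamma = \mathrm{id}$ summand contributes a principal part at $\z$, namely $X_{\z}^{n}(z-\overline{\z})^{-2k}$, so $\widetilde{\Psi}_{\z,n} \in \mathbb{E}_{2k}^{\z}$. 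Unfolding $\langle g, \widetilde{\Psi}_{\z,n}\rangle$ against the seed and computing in polar coordinates centered at $\z$, the angular integration isolates a diagonal contribution proportional to the middle coefficient $c_{g, \z}(n)$. In addition, the off-diagonal corrections coming from the non-identity tail of the Poincar\'e series and from the explicit $S_{2k}$-projection, after regularization at $\z$, couple this middle-range pairing to the remaining coefficients, in particular to the deep coefficients $c_{g, \z}(m)$ with $m \leq -2k$. Varying $\z$ over the finitely many $\Gamma$-classes of poles of $g$ and $n$ over $\{1-2k, \ldots, -1\}$, the vanishing of all pairings $\langle g, \widetilde{\Psi}_{\z,n}\rangle$ forces the full principal part of $g$ at each pole to vanish; hence $g$ is holomorphic on $\H$, so $g \in S_{2k}$, and the direct sum in \eqref{eqn:S2ksplitting} gives $g = 0$.

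For the non-extendability, suppose for contradiction that Theorem \ref{thm:xiDinner} extended to all of $\mathbb{S}_{2k}$. As indicated before the statement and formalized in Lemma \ref{lem:NoExtend}, this would force $\mathbb{E}_{2k} \perp \mathbb{D}_{2k}$. Combined with the definitional $\mathbb{E}_{2k} \perp S_{2k}$, we would get $\mathbb{D}_{2k} \subseteq \mathbb{E}_{2k}^{\perp} \cap \mathbb{S}_{2k} = S_{2k}$, and the direct sum in \eqref{eqn:S2ksplitting} would then yield $\mathbb{D}_{2k} = \{0\}$, contradicting the non-triviality of $\mathbb{D}_{2k}$ (surjectivity of $D^{2k-1}$ from non-trivial polar harmonic cusp forms).

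The main obstacle is the pairing formula for $\langle g, \widetilde{\Psi}_{\z,n}\rangle$ and the verification that the off-diagonal corrections from the Poincar\'e tail and the $S_{2k}$-projection, together with varying $\z$, produce a non-degenerate linear system detecting all principal-part coefficients of $g$. The naive polar-coordinate integration only detects the single middle coefficient $c_{g, \z}(n)$, so the careful accounting of the regularization at $\z$ and of the contributions from other $\Gamma$-translates of $\z$ (where $g$ also has poles) is what ultimately supplies access to the deep coefficients.
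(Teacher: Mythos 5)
Your reduction of the non-extendability statement to the ``in particular'' statement (via Lemma \ref{lem:NoExtend}, the orthogonality $\mathbb{E}_{2k}\perp S_{2k}$, and the splitting \eqref{eqn:S2ksplitting}) is fine and is essentially what the paper does. The gap is in your proof of the core claim $\left(\mathbb{E}_{2k}\right)_{\mathbb{S}_{2k}}^{\perp}=S_{2k}$. The pairing formula you hope for is not what the regularized inner product actually computes: for $g\in\mathbb{E}_{2k}\oplus\mathbb{D}_{2k}$ and $1-2k\leq n\leq -1$, the pairing of $g$ against $\Psi_{2k,n}^{\z}$ is \emph{not} proportional to the elliptic coefficient $c_{g,\z}(n)$ of $g$ itself; by Lemma \ref{lem:innermero} it equals $\frac{2\pi}{\y}c_{G,\z}^{+}(-n-1)$, where $G$ is any $\xi_{2-2k}$-preimage of $g$ and $-n-1$ runs over $[0,2k-2]$. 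In other words, pairing against $\mathbb{E}_{2k}$ only sees the polynomial part of the harmonic preimage, which is exactly the data annihilated by $\xi_{2-2k}$ and $D^{2k-1}$ and hence invisible in the elliptic expansion of $g$ (the paper emphasizes this at the end of Section \ref{sec:elliptic}). So the ``diagonal term $\propto c_{g,\z}(n)$'' is already wrong under the regularization \eqref{eqn:OurReg} (one cannot naively unfold against the seed when the pole of $g$ sits at $\z$), and the hoped-for non-degenerate linear system recovering the deep coefficients $c_{g,\z}(m)$, $m\leq -2k$ --- which you yourself flag as the main obstacle --- does not materialize; no amount of bookkeeping of off-diagonal or projection corrections gives access to $g$'s principal part this way. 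A further structural problem is that you only test at the poles of $g$, whereas the decisive leverage comes from testing at \emph{all} points.

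The missing idea is to work with the preimage rather than with $g$'s principal parts: the paper applies Lemma \ref{lem:innermero} with the single index $\ell=-1$, but at every $\z\in\H$, obtaining $c_{F,\z}^{+}(0)=0$ for all $\z$, where $\xi_{2-2k}(F)=f\in\mathbb{E}_{2k}\oplus\mathbb{D}_{2k}$ is the tested form. At any $\z$ where $F$ has no singularity one has $F(\z)=(\z-\overline{\z})^{2k-2}c_{F,\z}^{+}(0)=0$, so $F$ vanishes off a discrete set, hence $f=\xi_{2-2k}(F)$ vanishes there and, being meromorphic, is identically $0$. This replaces your linear-algebra scheme entirely; note also that your projection onto $S_{2k}^{\perp}$ is unnecessary, since $\Psi_{2k,n}^{\z}$ with $-2k<n<0$ already lies in $\mathbb{E}_{2k}^{\z}$ and is orthogonal to cusp forms by Lemma \ref{lem:genS2k} (2). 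As written, your argument for $g=0$ would not go through.
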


The paper is organized as follows. In Section \ref{sec:prelim}, we introduce polar harmonic cusp forms more formally and recall known results about elliptic expansions and Poincar\'e series that span the spaces of polar harmonic cusp forms and meromorphic cusp forms. In Section \ref{sec:innerPoincare}, we evaluate the inner product between different Poincar\'e series. In Section \ref{sec:xiDinner}, we prove Theorem \ref{thm:xiDinner}. The corollaries of Theorem \ref{thm:xiDinner} about degenerate subspaces (i.e., Corollaries \ref{cor:DperpIntro}, \ref{cor:DperpFDIntro}, and \ref{cor:HeckeExistIntro}) and Proposition \ref{prop:EperpIntro} about the optimality of the conditions in Theorem \ref{thm:xiDinner} are proven in Section \ref{sec:Dperp}.

%\section*{Acknowledgments}
%The research of the first author is supported by the Alfried Krupp Prize for Young University Teachers of the Krupp foundation.

\section{Preliminaries}\label{sec:prelim}
\subsection{Polar harmonic Maass forms}
 We begin by defining polar harmonic Maass forms. For $M=\left(\begin{smallmatrix} a&b\\c&d\end{smallmatrix}\right)\in\operatorname{SL}_2(\mathbb{Z})$, $\kappa\in\Z$, and $F:\mathbb{H}\to\mathbb{C}$, the \emph{slash-operator} is
 \noindent
\[
F|_{2\kappa}M(z):=(cz+d)^{-2\kappa}F\left(\frac{az+b}{cz+d}\right).
\]
\begin{definition}
For $\kappa\in\mathbb{Z}$, a {\it polar harmonic Maass form of weight $2\kappa$} is a function $F:\mathbb{H}\to\mathbb{C}$, which is real-analytic outside a discrete set of $\mathbb C$ and which satisfies the following conditions:
\begin{enumerate}[leftmargin=*,label={\rm(\arabic*)}]
\item For every $M\in\operatorname{SL}_2(\mathbb{Z})$, we have $F|_{2\kappa}M=F$.
\item We have $\Delta_{2\kappa}(F)=0$, with the \textit{weight $2\kappa$ hyperbolic Laplace operator}
$$
\Delta_{2\kappa}:=-y^2\left(\frac{\partial^2}{\partial x^2}+\frac{\partial^2}{\partial y^2}\right)+2\kappa iy\left(\frac{\partial}{\partial x}+i\frac{\partial}{\partial y}\right).
$$
\item For every $\mathfrak{z}\in\mathbb{H}$, there 
exists a minimal $n_0=n_0(\z)\in\mathbb{N}_0$ such that $(z-\mathfrak{z})^{n_0}F(z)$ is bounded in some neighborhood of $\mathfrak{z}$. 
\item The function $F$ grows at most linear exponentially at the cusps.
\end{enumerate}
%\hspace*{-4.5mm}
%If $(2)$ is replaced by $\Delta_{2\kappa}(F)=\lambda F$, then $F$ is called a {\it polar Maass form} with eigenvalue $\lambda$.
Polar harmonic Maass forms without singularities at $i\infty$ are called \begin{it}polar harmonic cusp forms\end{it} and the space of such forms is denoted by $\mathscr{H}_{2\kappa}$. Specifically, a polar harmonic Maass form $F$ is a polar harmonic cusp form if it is  bounded towards $i\infty$ (resp. vanishes towards $i\infty$) if $\kappa\leq 0$ (resp. $\kappa\geq 1$). 
\end{definition}
Differential operators acting on polar harmonic Maass forms play a significant role in this paper. For $\kappa\leq 0$, the operator $D^{1-2\kappa}$, which occurs in Theorem \ref{thm:xiDinner} with $\kappa=1-k$, maps weight $2\kappa$ polar harmonic Maass forms to weight $2-2\kappa$ meromorphic modular forms. The operator $\xi_{2\kappa}$ 
which also appears before Theorem \ref{thm:xiDinner},
also maps weight $2\kappa$ polar harmonic Maass forms to weight $2-2\kappa$ meromorphic modular forms. Note that $\xi_{2\kappa}(\mathscr{H}_{2\kappa})=\mathbb{S}_{2-2\kappa}$ by \cite[Theorem 1.1 (1), (4), Proposition 4.1]{BJK}. The subspace of $\mathscr{H}_{2\kappa}$ consisting of those $F$ for which $\xi_{2\kappa}(F)$ is a cusp form is denoted by $\mathscr{H}_{2\kappa}^{\operatorname{cusp}}$. 
The hyperbolic Laplace operator is related to $\xi_{2\kappa}$, $R_{2\kappa-2}$, and $L_{2\kappa}$ via
\[
\Delta_{2\kappa}=-\xi_{2-2\kappa}\circ \xi_{2\kappa}=-R_{2\kappa-2}\circ L_{2\kappa},
\]
where the \emph{raising} and \emph{lowering operators} are defined as 
\begin{equation*}
R_{2\kappa}:=2i\frac{\partial}{\partial z} +\frac{2\kappa}{y},\qquad L_{2\kappa}:=-2iy^2 \frac{\partial}{\partial \overline{z}}. 
\end{equation*}
\subsection{The Flipping operator}
For a function $F$ transforming of weight $2-2k\in -2\N_0$ define the {\it flipping operator} 
\begin{equation*}%\label{eqn:flipdef}
\mathfrak{F}_{2-2k}(F):=-\frac{y^{2k-2}}{(2k-2)!}\overline{R_{2-2k}^{2k-2}(F)},
\end{equation*}
where iterated raising is defined by
\[
R_{2-2k}^n:=R_{2n-2k}\circ \cdots\circ R_{4-2k}\circ R_{2-2k}
\]
The following lemma is given in \cite[Proposition 5.15]{Book} (stated there for forms that only have singularities at $i\infty$, but the proof does not use this property).
\begin{lemma}\label{lem:flip}
\noindent

\noindent
\begin{enumerate}[leftmargin=*,label={\rm(\arabic*)}]
\item 

The operator $\mathfrak{F}_{2-2k}$ is an involution, i.e., $\mathfrak{F}_{2-2k}\circ \mathfrak{F}_{2-2k}$ is the identity. 
\item
If $\Delta_{2-2k}(F)=0$, then the $\mathfrak{F}_{2-2k}$ satisfies
\[
%\begin{align}\label{eqn:flipxi}
\xi_{2-2k}(\mathfrak{F}_{2-2k}(F))=\frac{(4\pi)^{2k-1}}{(2k-2)!} D^{2k-1}(F).
\]
\item
%\label{eqn:flipD}
If $\Delta_{2-2k}(F)=0$, then we have
\[
D^{2k-1}(\mathfrak{F}_{2-2k}(F))=\frac{(2k-2)!}{(4\pi)^{2k-1}}\xi_{2-2k}(F).
\]
\end{enumerate}
\end{lemma}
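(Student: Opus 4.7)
My plan is to prove $(2)$ by a direct calculation, prove $(3)$ by an analogous direct computation, and deduce $(1)$ from these via a uniqueness argument. The uniqueness rests on the observation that the pair $(\xi_{2-2k},D^{2k-1})$ separates polar harmonic Maass forms of weight $2-2k$: a form in the kernel of $\xi_{2-2k}$ is meromorphic modular of weight $2-2k$, and such a form additionally killed by $D^{2k-1}$ would agree locally with a polynomial of degree less than $2k-1$, which by modularity at negative weight must vanish. So once $(2)$ and $(3)$ hold, applying both sides of each identity to $\mathfrak{F}_{2-2k}^2(F)-F$ gives zero and forces $\mathfrak{F}_{2-2k}^2=\mathrm{id}$.

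For $(2)$ I unfold directly:
\[
\xi_{2-2k}(\mathfrak{F}_{2-2k}(F))=2iy^{2-2k}\partial_z\overline{\mathfrak{F}_{2-2k}(F)}=-\frac{2iy^{2-2k}}{(2k-2)!}\,\partial_z\!\left(y^{2k-2}R_{2-2k}^{2k-2}(F)\right).
\]
Using $\partial_z y=\tfrac{1}{2i}$ in the product rule and then rewriting $\partial_z R_{2-2k}^{2k-2}(F)$ by means of the defining relation $R_{2k-2}(G)=2i\partial_z G+\tfrac{2k-2}{y}G$ applied to $G=R_{2-2k}^{2k-2}(F)$, the two $R_{2-2k}^{2k-2}(F)/y$ terms cancel and what remains is $-\tfrac{1}{(2k-2)!}R_{2-2k}^{2k-1}(F)$. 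Bol's identity for harmonic Maass forms, $R_{2-2k}^{2k-1}(F)=(-4\pi)^{2k-1}D^{2k-1}(F)$, then gives the asserted formula.

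For $(3)$ the same philosophy applies with $D^{2k-1}$ in place of $\xi_{2-2k}$: one expands $D^{2k-1}(y^{2k-2}\,\overline{R_{2-2k}^{2k-2}(F)})$ by the Leibniz rule and identifies the resulting expression, through the incomplete-gamma structure of the nonholomorphic part $F^-$, with $\xi_{2-2k}(F)$ times the claimed scalar. I expect this calculation to be noticeably heavier than that of $(2)$, because $D^{2k-1}$ does not absorb the outer factor $y^{2k-2}$ as cleanly as the single derivative in $\xi_{2-2k}$ does. Alternatively, once $(1)$ is in hand, $(3)$ follows immediately by applying $(2)$ to $\mathfrak{F}_{2-2k}(F)$ (which is again harmonic since $\mathfrak{F}_{2-2k}$ preserves $\Delta_{2-2k}=0$) and invoking the involution property to collapse $\mathfrak{F}_{2-2k}^2(F)=F$.

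The main obstacle is $(1)$. The route above shifts this difficulty into the direct proof of $(3)$, which I expect to be the hardest single calculation in the lemma. If instead one tries to prove $(1)$ head-on, writing $G:=R_{2-2k}^{2k-2}(F)$ and iterating the identity $R_w(y^{-w}\overline H)=y^{-w-2}\overline{L_{\mathrm{wt}(H)}(H)}$ (obtained by conjugating the alternative form $R_w=2iy^{-w}\partial_z y^w$) reduces the double composition to
\[
\mathfrak{F}_{2-2k}^2(F)=\frac{1}{((2k-2)!)^2}\,L_{2k-2}^{2k-2}R_{2-2k}^{2k-2}(F).
\]
One must then verify $L_{2k-2}^{2k-2}R_{2-2k}^{2k-2}(F)=((2k-2)!)^2F$ for harmonic $F$ of weight $2-2k$ by repeatedly applying the commutator $L_{w+2}R_w=R_{w-2}L_w-w$ to migrate all $L$'s past all $R$'s and exploiting harmonicity $R_{-2k}L_{2-2k}(F)=0$ to annihilate every surviving term with an inner $R\circ L$ hitting $F$. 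The combinatorial bookkeeping needed to check that the scalar obtained from the telescoped commutator sum equals $((2k-2)!)^2$ is the genuinely delicate part of the argument.
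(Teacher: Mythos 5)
The paper itself gives no proof of this lemma (it quotes it from \cite[Proposition 5.15]{Book}), so the only issue is whether your argument stands on its own. Your part (2) is correct and complete: the product rule with $\partial_z y=\tfrac{1}{2i}$, the recombination into $R_{2k-2}\bigl(R_{2-2k}^{2k-2}(F)\bigr)=R_{2-2k}^{2k-1}(F)$, and the Bol-type identity $R_{2-2k}^{2k-1}=(-4\pi)^{2k-1}D^{2k-1}$ yield exactly the stated constant (and, as your computation shows, harmonicity is not even needed there). The auxiliary facts you quote are also right: $R_w\bigl(y^{-w}\overline{H}\bigr)=y^{-w-2}\overline{L(H)}$ does give $\mathfrak{F}_{2-2k}^2=\tfrac{1}{((2k-2)!)^2}L_{2k-2}^{2k-2}\circ R_{2-2k}^{2k-2}$, the commutator $L_{w+2}R_w=R_{w-2}L_w-w$ is correct, and the separation argument is valid for $k\geq 2$: a weight $2-2k$ form killed by $\xi_{2-2k}$ and $D^{2k-1}$ is locally a polynomial of degree at most $2k-2$, hence (by $T$-invariance and the $S$-transformation in negative weight) zero.

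The genuine gap is that neither (3) nor (1) is actually proven, and your fallback options do not repair this. The ``analogous direct computation'' of (3) is only described in a sentence; the alternative of deducing (3) from (1) and (2) is circular, since your route to (1) presupposes (3); and the one non-circular path you offer, verifying $L_{2k-2}^{2k-2}R_{2-2k}^{2k-2}(F)=((2k-2)!)^2F$ for harmonic $F$, is precisely the computation you label ``the genuinely delicate part'' and leave undone. It is, however, not hard to close: with $w=2-2k$ one proves by induction, using $L_{w+2}R_w=R_{w-2}L_w-w$ and $R_{w-2}L_w(F)=-\Delta_w(F)=0$, that $L_{w+2n}R_w^{n}(F)=-n(n-1+w)R_w^{n-1}(F)$, and then $L_{w+2n}^{}$-peeling gives $L_{2k-2}^{2k-2}R_{2-2k}^{2k-2}(F)=\prod_{n=1}^{2k-2}n(2k-1-n)\,F=((2k-2)!)^2F$; this yields (1), after which your derivation of (3) from (1) and (2) is legitimate. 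One more step you assert without proof: applying (2) (and, in your uniqueness argument, (3)) to $\mathfrak{F}_{2-2k}(F)$ requires that $\mathfrak{F}_{2-2k}(F)$ is again a modular, harmonic form of weight $2-2k$. Modularity is a short check; harmonicity follows from your (2) together with $\Delta_{2-2k}=-\xi_{2k}\circ\xi_{2-2k}$ once one knows that $D^{2k-1}(F)$ is meromorphic for harmonic $F$, but this should be said rather than assumed. As written, then, the proposal proves (2) and outlines a viable strategy, but (1) and (3) remain unestablished.
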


\subsection{Elliptic expansions}\label{sec:elliptic}
We next consider elliptic expansions of polar harmonic Maass forms. For this, we define for $0\leq w <1$ and $a\in \N$ and $b\in \Z$
\begin{equation}\label{eqn:beta0def}
\beta_0\left(w;a,b\right):=\beta\left(w;a,b\right)-\mathcal{C}_{a,b}
\end{equation}
where $\beta({w};a,b):=\int_0^{w} t^{a-1} (1-t)^{b-1} dt$ is the \begin{it}incomplete beta function\end{it} and
\begin{equation*}%\label{eqn:Cdef}
\mathcal{C}_{a,b}:=\sum_{\substack{0\leq j\leq a-1 \\ j\neq -b}} \binom{a-1}{j}\frac{(-1)^{j}}{j+b}.
\end{equation*}
Suppose that $k\in \N$ and $\mathfrak{z}\in\mathbb{H}$. %The expansion below follows from \cite[Proposition 2.2]{BJK}. 
\noindent

\noindent
\begin{enumerate}[leftmargin=*,label={\rm(\arabic*)}]
\item
Suppose that $F$ satisfies $\Delta_{2\kappa} (F) = 0$ and for some $n_0\in \N$ the function $r_{\mathfrak{z}}^{n_0}(z)F(z)$ is bounded in some neighborhood $\mathcal{N}$ around $\mathfrak{z}$, where $r_{\z}(z):=|X_{\z}(z)|$. Then there exist $c_{F,\mathfrak{z}}^{\pm}(n) \in\mathbb{C}$ such that for $z\in\mathcal{N}$ we have
\begin{multline}
\ \ F(z)=\left(z-\overline{\mathfrak{z}}\right)^{2k-2}\sum_{n\geq -n_0} c_{F,\mathfrak{z}}^+(n) X_{\mathfrak{z}}^n(z)\\ \label{eqn:elliptic}
+ \left(z-\overline{\mathfrak{z}}\right)^{2k-2}\sum_{n\leq n_0}{c_{F,\mathfrak{z}}^-}(n) \beta_0\left(1-r_{\mathfrak{z}}^2(z);2k-1,-n\right)X_{\mathfrak{z}}^n(z).
\end{multline}
\item
If $F\in\mathscr{H}_{2-2k}^{\operatorname{cusp}}$, then the second sum in \eqref{eqn:elliptic} only runs over $n<0$.
\end{enumerate}
\begin{remark}
The expansion \eqref{eqn:elliptic} was written slightly differently in \cite[Proposition 2.2]{BJK}. One obtains \eqref{eqn:elliptic}
 by plugging in \eqref{eqn:beta0def} to replace the incomplete 
$\beta$-functions appearing in \cite[Proposition 2.2]{BJK} with $\beta_0$ for $0\leq n\leq 2k-2$. 
The reason for this change of notation is that the coefficients $c_{F,\z}^+(n)$ from the expansion in \eqref{eqn:elliptic} naturally occur in our computation of the inner product in Lemma \ref{lem:innermero} below.
\end{remark}
\ \ \ \ We define the \begin{it}meromorphic part of the elliptic expansion\end{it} around $\mathfrak{z}$ by
\begin{equation}\label{eqn:elliptic+}
F_{\mathfrak{z}}^+ (z):=\left(z-\overline{\mathfrak{z}}\right)^{2k-2}\sum_{n\geq -n_0} {c_{F,\mathfrak{z}}^+}(n)X_{\mathfrak{z}}^n(z)
\end{equation}
and its \begin{it}non-meromorphic part\end{it} by 
\begin{equation}\label{eqn:elliptic-}
F_{\mathfrak{z}}^- (z) :=\left(z-\overline{\mathfrak{z}}\right)^{2k-2}
\sum_{n\leq n_0} c_{F,\mathfrak{z}}^-(n)\beta_0\left(1-r_{\mathfrak{z}}^2(z);2k-1,-n\right) X_{\mathfrak{z}}^n(z).
\end{equation}
The terms in \eqref{eqn:elliptic} which grow as $z\to\z$ are called the \begin{it}principal part of $F$ at $\z$.\end{it} Specifically, these are the terms in \eqref{eqn:elliptic+} with $n<0$ and those terms in \eqref{eqn:elliptic-} with $n\geq 0$ 
(see \cite[Lemma 5.4]{BKRohrlich}).
We furthermore define the \begin{it}polynomial part\end{it} of $F$ around $\z$ 
\begin{equation*}%\label{eqn:polpartdef}
p_{F,\z}(z):=\left(z-\overline{\mathfrak{z}}\right)^{2k-2}\sum_{n=0}^{2k-2}{c_{F,\mathfrak{z}}^+}(n)X_{\mathfrak{z}}^n(z).
\end{equation*}
Note that $p_{F,\z}$ is the only contribution in \eqref{eqn:elliptic} that is a polynomial in $z$. We
 show in Lemma \ref{lem:innermero} below that if $f=\xi_{2-2k}(F)\in \mathbb{E}_{2k}\oplus \mathbb{D}_{2k} $, then the polynomial part $p_{F,\z}$ naturally appears when computing the inner product between $f$ and elements of $\mathbb{E}_{2k}^{\z}$. 

Hence for $f\in \mathbb{E}_{2k}\oplus \mathbb{D}_{2k}$ and $g\in \mathbb{E}_{2k}^{\z}$, one ``only'' needs to determine $p_{F,\z}$ for some $F\in\mathscr{H}_{2-2k}$ with $\xi_{2-2k}(F)=f$ to compute the inner product $\left<f,g\right>$. However, this inner product is difficult to compute if one only works with meromorphic modular forms of weight $2k$; starting with $f$ or $D^{2k-1}(F)$, the polynomial part is somewhat mysterious because it is annihilated by $\xi_{2-2k}$ and $D^{2k-1}$. Hence one cannot immediately compute the polynomial part simply by looking at elliptic expansions of weight $2k$ meromorphic cusp forms.

\subsection{Poincar\'e series}
For $\z\in\H$, $k\in \N_{\geq 2}$, and $n\in\Z$, we define the (meromorphic) \textit{elliptic Poincar\'e series} (see for example \cite[(22) and Satz 7]{Pe2} 
\begin{equation*}%\label{eqn:Psidef}
\Psi_{2k,m}^{\z}:=\sum_{M\in\operatorname{SL}_2(\mathbb{Z})}\psi_{2k,m}^{\z}|_{2k}M,
\end{equation*}
with
\[
\psi_{2k,m}^{\z}(z):=(z-\overline{\z})^{-2k}X_{\z}^m(z).
\]
The following lemma is due to Petersson (see \cite[Satz 7]{PeEinheit} and \cite[Satz 7]{Pe2}).
\begin{lemma}\label{lem:genS2k} 
\noindent

\noindent
\begin{enumerate}[leftmargin=*,label={\rm(\arabic*)}]
\item For any $\z \in\H$, the space $S_{2k}$ is spanned by $\{\Psi_{2k,m}^{\z}:m\in\mathbb{N}_0\}$.
\item For any $\z\in\H$, the set $\{\Psi_{2k,m}^{\z}: -2k<m<0\}$ is a basis for $\mathbb{E}_{2k}^{\z}$ and  $\mathbb{E}_{2k}$ is spanned by $\{\Psi_{2k,m}^{\z}:m\in\Z, 1-2k\leq m\leq -1,\z\in \H\}$.
\item  For any $\z\in\H$, a basis for $\mathbb{D}_{2k}^{\z}$ is given by $\{\Psi_{2k,m}^{\z}: m\leq -2k\}$ and $\mathbb{D}_{2k}$ is spanned by $\{\Psi_{2k,m}^{\z}:m\in\Z, m\leq -2k,\z\in \H\}$.
\end{enumerate}
\end{lemma}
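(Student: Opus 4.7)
The three parts of the lemma all stem from a single ``reproducing kernel'' principle: for suitable weight $2k$ meromorphic forms $g$, the regularized inner product $\langle g,\Psi_{2k,m}^{\z}\rangle$ should be a non-zero constant multiple of the elliptic coefficient $c_{g,\z}(m)$. The plan is first to set up $\Psi_{2k,m}^{\z}$ as a convergent weight $2k$ meromorphic modular form with controlled elliptic expansion at $\z$, then to establish the reproducing identity by unfolding, and finally to deduce the spanning and basis statements by a standard principal-part matching argument.

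First I would verify absolute convergence of $\Psi_{2k,m}^{\z}$ for every $m\in\Z$ when $k\geq 2$; away from the $\SL_2(\Z)$-orbit of $\z$, the series is dominated by the Eisenstein-type bound $\sum_{M}|c\z+d|^{-2k}$, and for $m<0$ the resulting function is meromorphic with poles exactly along that orbit. Inspecting the $M=\pm I$ summands shows that the elliptic expansion of $\Psi_{2k,m}^{\z}$ at $\z$ has a non-zero coefficient at index $n=m$; every other $M$ satisfies $M^{-1}\z\neq\z$, so its slashed contribution is holomorphic at $\z$ and feeds only into coefficients with $n\geq 0$. Next, for $g\in\mathbb{S}_{2k}^{\z}$ the standard unfolding yields
\[
\langle g,\Psi_{2k,m}^{\z}\rangle=\int_{\H}g(z)\overline{\psi_{2k,m}^{\z}(z)}\,y^{2k-2}\,dx\,dy,
\]
and a change of coordinates to the elliptic disk variable $X_{\z}$, together with the orthogonality of powers of $X_{\z}$ and $\overline{X_{\z}}$ on the unit disk, selects the coefficient $c_{g,\z}(m)$ up to a non-zero Beta-integral factor. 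In particular, $\Psi_{2k,m}^{\z}\perp S_{2k}$ whenever $m<0$, since cusp forms are holomorphic at $\z$.

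For (1), each $\Psi_{2k,m}^{\z}$ with $m\geq 0$ lies in $S_{2k}$, and any cusp form orthogonal to all of them has every non-negative elliptic coefficient at $\z$ vanishing, forcing it to be identically zero. For (2) and (3), membership of the listed Poincar\'e series in $\mathbb{E}_{2k}^{\z}$ (respectively $\mathbb{D}_{2k}^{\z}$) is immediate: the index $m$ of the leading term lies in the prescribed range, lower indices cannot appear, and orthogonality to $S_{2k}$ has just been established. Linear independence follows because the leading elliptic indices are distinct. For spanning, the elliptic expansion of $\Psi_{2k,m}^{\z}$ contributes to indices $\{m, m+1, \ldots\}$, giving a triangular system, so given $f$ one can iteratively subtract suitable multiples of the $\Psi_{2k,m}^{\z}$ to cancel the principal part of $f$ at $\z$; the remainder is holomorphic at $\z$ and orthogonal to $S_{2k}$, hence a cusp form orthogonal to itself and therefore zero. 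The statements for the global spaces $\mathbb{E}_{2k}$ and $\mathbb{D}_{2k}$ are formal consequences of the local ones by taking unions over $\z$.

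The main obstacle is making the unfolding step rigorous when $g\in\mathbb{S}_{2k}^{\z}$ is itself meromorphic: the regularized inner product must be handled carefully near the pole orbit, typically by excising small disks around the poles of $g\overline{\psi_{2k,m}^{\z}}$ and checking that the boundary contributions vanish in the limit. Tracking the non-vanishing of the Beta-integral constant for all relevant $m$, in particular for the range $m\leq -2k$ where higher-order poles at $\z$ enter, requires some bookkeeping but is otherwise a routine special-function calculation.
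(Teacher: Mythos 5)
Your route is, in substance, the same principal--part--matching strategy as the paper's; the difference is that the paper simply cites Petersson for the three ingredients (completeness of the $\Psi_{2k,m}^{\z}$, $m\geq 0$, in $S_{2k}$ from \cite[Satz 7]{PeEinheit}, the principal parts from \cite[Satz 7]{Pe2}, and orthogonality to cusp forms from \cite[Satz 8]{Pe2}), whereas you re-derive these facts by unfolding and the coefficient formula and then run the same subtraction argument. That is a legitimate, self-contained alternative, and the steps you actually use are sound: orthogonality $\left<\Psi_{2k,m}^{\z},g\right>=0$ for $g\in S_{2k}$, $m<0$, the Petersson coefficient formula for cuspidal $g$ and $m\geq 0$, membership and triangularity of principal parts, and positive-definiteness on $S_{2k}$ to kill the cuspidal remainder.

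One claim, however, is false as stated and should not be advertised as the ``single principle'' behind the lemma: for meromorphic $g\in\mathbb{S}_{2k}^{\z}$ it is \emph{not} true that $\left<g,\Psi_{2k,m}^{\z}\right>$ is a nonzero multiple of $c_{g,\z}(m)$. The disk-orthogonality step needs the expansion of $(z-\overline{\z})^{2k}g(z)$ in powers of $X_{\z}$ to converge on the whole disk $|X_{\z}|<1$; this holds for cusp forms, but a meromorphic $g$ has poles at the other $\SL_2(\Z)$-translates of $\z$, which are interior points of the disk, so term-by-term angular integration is only valid near $\z$ and the outer region contributes. This is exactly the subtlety the paper is built around: Lemma \ref{lem:innermero} shows that for $f\in\mathbb{E}_{2k}\oplus\mathbb{D}_{2k}$ one has $\left<\Psi_{2k,\ell}^{\z},f\right>=\frac{2\pi}{\y}\delta_{\ell\leq -1}c_{F,\z}^{+}(-\ell-1)$ for a harmonic preimage $F$, not an elliptic coefficient of $f$ itself; for instance $f\in\mathbb{D}_{2k}^{\z}$ has $c_{f,\z}(\ell)=0$ for $1-2k\leq\ell\leq -1$, yet by Proposition \ref{prop:Eperp} such $f$ is generically not orthogonal to $\mathbb{E}_{2k}^{\z}$. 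Since your actual applications are only to cuspidal $g$, the proof survives once you restrict the ``reproducing'' identity to holomorphic $g$. Finally, a caveat you share with the paper's own wording: at an elliptic point $\z$ the stabilizer is nontrivial and the sum over it annihilates $\psi_{2k,m}^{\z}$ unless $m\equiv -k\pmod{\omega_{\z}}$, so some $\Psi_{2k,m}^{\z}$ vanish identically; the assertions ``nonzero coefficient at $n=m$'' and linear independence therefore require $\z$ non-elliptic or this congruence restriction.
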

\begin{proof}
Part (1)  is part of \cite[Satz 7]{PeEinheit}. 
For (2), (3), the principal parts of $\Psi_{2k,m}^{\z}$ were determined in \cite[Satz 7]{Pe2} to be constant multiples of $\psi_{2k,m}^{\z}$. Restricting $m$ as in (2) (resp. (3)) matches the principal part conditions in the definition of $\mathbb{E}_{2k}$ (resp. $\mathbb{D}_{2k}$). 
Moreover, for fixed $\z$ the forms $\Psi_{2k,m}^{\z}$ with $m<0$ are clearly linearly independent.
The orthogonality of $\Psi_{2k,m}^{\z}$ to cusp forms for $m<0$ was shown in \cite[Satz 8]{Pe2}.\qedhere
\end{proof}
Next consider \textit{harmonic elliptic Poincar\'e series} given by (\cite[(4.5), Theorem 4.3]{BJK})
\begin{equation*}%\label{eqn:Pdef}
\mathbb{P}_{2-2k,m}^{\z}:=\sum_{M\in\operatorname{SL}_2(\mathbb{Z})}\varphi_{2-2k,m}^{\z}|_{2-2k}M,
\end{equation*}
where
\begin{equation*}%\label{eqn:varphidef}
\varphi_{2-2k,m}^{\z}(z):=(z-\overline{\z})^{2k-2}\beta\left(1-r^2_\z(z); 2k-1, -m\right)X^m_\z(z).
\end{equation*}
For $k\in\N_{\geq 2}$, the Poincar\'e series $\mathbb{P}_{2-2k,m}^{\z}$ and $\Psi_{2k,m}^{\z}$ are related via the differential operators $\xi_{2-2k}$ and $D^{2k-1}$.
\begin{lemma}\label{lem:PPsiRel}
Let $k\in\N_{\geq 2}$.

\noindent
\begin{enumerate}[leftmargin=*,label={\rm(\arabic*)}]
\item
 We have 
\[%\begin{equation}\label{eqn:xiP}
\xi_{2-2k}\left(\mathbb{P}_{2-2k,m}^{\z}\right)=(4\y)^{2k-1}\Psi_{2k, -m-1}^{\z}.
\]
%\end{equation}
\item
We have  
%\begin{equation}\label{eqn:DP}
\[
D^{2k-1}\left(\mathbb{P}_{2-2k,m}^{\z}\right)=-(2k-2)!\left(\frac{\y}{\pi}\right)^{2k-1}\Psi_{2k, m+1-2k}^{\z}.
\]
%\end{equation}
\item

If  $F\in \mathscr{H}_{2-2k}$ satisfies $\xi_{2-2k}(F)\in S_{2k}$, then $D^{2k-1}(F)\in \mathbb{D}_{2k}$.
\end{enumerate}
\end{lemma}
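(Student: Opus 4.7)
For parts (1) and (2), I would reduce to verifying the identity on the seed function $\varphi_{2-2k,m}^\z$, using that $\xi_{2-2k}$ intertwines the weight-$(2-2k)$ and weight-$2k$ slash operators and that $D^{2k-1}$ does the same by the Bol identity. For (1), only the $\beta$-factor depends on $\overline{z}$; the chain rule with $\partial_{\overline{z}}\overline{X_\z} = -2i\y/(\overline{z}-\z)^2$ and the identity $1-r_\z^2(z) = 4\y y/|z-\overline{\z}|^2$ collapses $\xi_{2-2k}(\varphi_{2-2k,m}^\z)$ after routine algebra to $(4\y)^{2k-1}\psi_{2k,-m-1}^\z(z)$; summing over $\SL_2(\Z)$-cosets gives (1).

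For (2), direct iteration of $D^{2k-1}$ on the seed is cumbersome. A cleaner approach is to apply Lemma \ref{lem:flip}(2), valid since $\mathbb{P}_{2-2k,m}^\z$ is harmonic:
\[
D^{2k-1}(\mathbb{P}_{2-2k,m}^\z) = \tfrac{(2k-2)!}{(4\pi)^{2k-1}}\,\xi_{2-2k}\bigl(\mathfrak{F}_{2-2k}(\mathbb{P}_{2-2k,m}^\z)\bigr).
\]
Using $\mathfrak{F}_{2-2k}(F) = -y^{2k-2}/(2k-2)!\cdot\overline{R_{2-2k}^{2k-2}F}$ and iterated raising on the seed, I would identify $\mathfrak{F}_{2-2k}(\mathbb{P}_{2-2k,m}^\z)$ (modulo meromorphic pieces killed by $\xi_{2-2k}$) with $-\mathbb{P}_{2-2k,2k-m-2}^\z$ by matching principal parts around $\z$; (1) then delivers the formula in (2). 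The main obstacle is this flip-of-seed identification, which requires careful bookkeeping of iterated raising on the incomplete $\beta$-factor.

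For (3), given $F \in \mathscr{H}_{2-2k}$ with $f := \xi_{2-2k}(F) \in S_{2k}$, expand $f = \sum_j \lambda_j \Psi_{2k,m_j}^{\z_j}$ via Lemma \ref{lem:genS2k}(1) with $m_j \geq 0$, and set $\tilde F := \sum_j (4\y_j)^{1-2k}\lambda_j \mathbb{P}_{2-2k,-m_j-1}^{\z_j}$. By (1), $\xi_{2-2k}(\tilde F) = f$, so $h := F - \tilde F$ is meromorphic. Part (2) gives $D^{2k-1}(\tilde F) \in \mathbb{D}_{2k}$, so it suffices to show $D^{2k-1}(h) \in \mathbb{D}_{2k}$. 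Since $h$ is meromorphic of weight $2-2k$ and bounded at $i\infty$, $D$ kills the constant Fourier term, so $D^{2k-1}(h) \in \mathbb{S}_{2k}$. The $2k-1$ contour identities $\oint h^{(2k-1)}(z)(z-\z)^j\,dz = 0$ for $0 \leq j \leq 2k-2$ around each $\z$ (by iterated integration by parts, since $h^{(2k-2-j)}$ is single-valued meromorphic) force the Laurent coefficients of $D^{2k-1}h$ at $\z$ in degrees $1-2k \leq l \leq -1$ to vanish, and the triangular conversion from Laurent to elliptic expansions yields $c_{D^{2k-1}h,\z}(n) = 0$ for $1-2k \leq n \leq -1$. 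Orthogonality of $D^{2k-1}(h)$ to $S_{2k}$ follows from integration by parts in the regularized Petersson pairing, using that $h^{(j)} \to 0$ at $i\infty$ for $j \geq 1$ and that cusp forms are holomorphic on $\H$. The main obstacle in (3) is this orthogonality step, which relies on the regularization machinery of \cite{BKvP}.
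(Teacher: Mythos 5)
Your part (1) is fine: the seed computation with $\partial_{\overline z}\overline{X_\z}=-2i\y/(\overline z-\z)^2$ and $1-r_\z^2=4y\y/|z-\overline\z|^2$ does produce $(4\y)^{2k-1}\psi_{2k,-m-1}^{\z}$ exactly, and summing over cosets is legitimate; note for comparison that the paper does not compute at all here but simply cites \cite[Theorem 4.3]{BJK} for both (1) and (2). The gap is in your part (2). The identity $\mathfrak{F}_{2-2k}\bigl(\mathbb{P}_{2-2k,m}^{\z}\bigr)\equiv-\mathbb{P}_{2-2k,2k-2-m}^{\z}$ modulo $\ker\xi_{2-2k}$ is, given (1), essentially equivalent to the statement (2) you are trying to prove, and you do not establish it: the iterated raising of the incomplete $\beta$-seed, which is the only way to get your hands on the principal parts of $\mathfrak{F}_{2-2k}(\mathbb{P}_{2-2k,m}^{\z})$, is exactly the computation you defer (you call it the main obstacle), and it is the same work as proving (2) directly. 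Moreover, even granting the principal parts match, ``matching principal parts'' alone does not give the identification you need: the difference is then a polar harmonic cusp form of weight $2-2k$ with no singularities in $\H$, and to conclude that its image under $\xi_{2-2k}$ vanishes (which is what your application of (1) requires) you must add a further argument, e.g.\ the Stokes/Bruinier--Funke pairing showing that such a form pairs to zero against all of $S_{2k}$ together with positive-definiteness on $S_{2k}$. Without both ingredients your (2) could a priori be off by an undetermined cusp form.

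Your part (3) is structurally a genuinely different route from the paper's: the paper expands $F$ itself in the harmonic Poincar\'e series $\mathbb{P}_{2-2k,m}^{\z}$ (using the spanning statement from \cite[Theorem 4.3]{BJK}), applies (1) and (2), and then Lemma \ref{lem:genS2k} (3) hands it both the coefficient conditions and the orthogonality to cusp forms (the latter via Petersson's Satz 8). You instead subtract a preimage $\tilde F$ of $\xi_{2-2k}(F)$ built from (1) (note the antilinearity of $\xi_{2-2k}$: you need $\overline{\lambda_j}$, not $\lambda_j$) and reduce to a meromorphic $h$; your contour identities $\oint h^{(2k-1)}(z)(z-\z)^j\,dz=0$ for $0\le j\le 2k-2$ are correct, and the triangularity argument does kill the elliptic coefficients $c_{D^{2k-1}h,\z}(n)$ for $1-2k\le n\le -1$ (the terms with $n\le-2k$ drop out because the relevant binomial coefficients vanish), which is a nice self-contained replacement for part of Lemma \ref{lem:genS2k} (3). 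But the remaining requirement in the definition of $\mathbb{D}_{2k}$, namely $\langle D^{2k-1}(h),g\rangle=0$ for all $g\in S_{2k}$, is only asserted (``integration by parts in the regularized Petersson pairing''). This is not routine: it is precisely the content the paper obtains from Satz 8 after passing to Poincar\'e series, or alternatively a BKvP-style lengthy regularized Stokes computation with boundary contributions at the poles of $h$; as written, this step is a genuine missing piece, so parts (2) and (3) of your proposal are incomplete.
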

\begin{proof}
 (1) and (2) follow by \cite[Theorem 4.3]{BJK}. \\
\noindent
(3) By \cite[Theorem 4.3]{BJK}, the space $\mathscr{H}_{2-2k}$ is spanned by the Poincar\'e series $\mathbb{P}_{2-2k,m}^{\z}$. Hence for $F\in \mathscr{H}_{2-2k}$, there exist $c_{m,\z}\in\C$ (only finitely many non-zero) for which
\begin{equation}\label{eqn:Farbitrary}
F= \sum_{m\in\Z} \sum_{\z\in\H} c_{m,\z} \mathbb{P}_{2-2k,m}^{\z}. 
\end{equation}
Using part (2), we have
\begin{equation}\label{eqn:PPsiRel2}
D^{2k-1}(F)=-(2k-2)! \sum_{\z\in\H}\left(\frac{\y}{\pi}\right)^{2k-1}\sum_{m\in\Z} c_{m,\z} \Psi_{2k,m+1-2k}^{\z}. 
\end{equation}
 Combining part (1) with Lemma \ref{lem:genS2k} (1) and noting that the intersection of $\mathbb{E}_{2k}\oplus \mathbb{D}_{2k}$ with $S_{2k}$ is trivial by definition, \eqref{eqn:Farbitrary} implies that $\xi_{2-2k}(F)\in S_{2k}$ if and only if 
\[
\sum_{m\geq 0}\sum_{\z\in\H}c_{m,\z}\mathbb{P}_{2-2k,m}^{\z}=0.
\]
Under this restriction, \eqref{eqn:PPsiRel2} and Lemma \ref{lem:genS2k} (3) imply that $D^{2k-1}(F)\in \mathbb{D}_{2k}$. \qedhere
\end{proof}

Lemma \ref{lem:PPsiRel} (1) yields a relationship between the elliptic coefficients of $\mathbb{P}_{2-2k,n}^{\z}$ and $\Psi_{2k,m}^{\z}$ for certain $n,m$.
Define $\delta_{\z_0,\z}:=1$ if $\z_0$ is equivalent under the action of $\SL_2(\Z)$ to $\z$ and $\delta_{\z_0,\z}:=0$ otherwise and let $\omega_{\mathfrak{z}}:=\#\Gamma_{\mathfrak{z}}$ be the size of the stabilizer group $\Gamma_{\z}$ of $\mathfrak{z}$ in $\operatorname{PSL}_2(\mathbb{Z})$. The elliptic expansion of $\Psi_{2k,m}^{\z}$ is given by  (see \cite[(2.26)]{BKvP}) 

\begin{equation*}%\label{eqn:Psiexpand}
\Psi_{2k,m}^{\z}(z)=2\omega_{\z}\left(z-\overline{\varrho}\right)^{-2k}\left(\delta_{\z,\varrho}X_{\varrho}^{m}(z)+\sum_{n\geq 0}c_{m,\varrho}^{\z}(n) X_{\varrho}^{n}(z)\right).
\end{equation*}
 For $\z_0\in\H$, as in \eqref{eqn:elliptic} we write (see \cite[Theorem 4.3]{BJK} for the principal part)
\begin{align}\nonumber
	&\mathbb{P}_{2-2k,m+k-1}^{\z_0}(z)
	= 2\omega_{\z_0}\delta_{\z_0,\z} (z-\overline{\z})^{2k-2} \bigg(\delta_{m<1-k} \mathcal{C}_{2k-1,1-k-m}\\\nonumber
 &\hspace{30pt}+\delta_{m\geq 1-k}\beta_0\!\left(1-r_{\z}^2(z);2k-1,1-k-m\right)\bigg)\!X_{\z}^{m+k-1}(z)
 +2\omega_{\z_0}(z-\overline{\z})^{2k-2}\\\label{eqn:expandPgen}
	&\times\left(\sum_{n\geq 0} c_{m,\z}^{\z_0,+}(n) X_{\z}^n(z) + \sum_{n\leq -1} c_{m,\z}^{\z_0,-}(n)\beta_0\!\left(1-r_{\z}^2(z);2k-1,-n\right)X_{\z}^n(z)\!\right).
\end{align}
For $F=\mathbb{P}_{2-2k,k-1\pm m}^{\z_0}$ and $\z\in\H$, we have $n_0(\z)=0$ if $\z$ is not equivalent under $\SL_2(\Z)$ to $\z_0$ and by \cite[Lemma 5.4]{BKRohrlich} and \cite[Theorem 4.3]{BJK} we have $n_0(\z_0)=\left|k-1\pm m\right|+\delta_{k-1=\mp m}$. Set 
\begin{multline}\label{eqn:bdef}
b_{m,\z}^{\z_0}(n)\\:=\begin{cases}
-\frac{(-n+2k-2)!}{(-n-1)!}
2\omega_{\z_0}\delta_{\z_0,\z}\delta_{n=m+k-1}\mathcal{C}_{2k-1,1-k-m}&\text{if }n<0,\\
-2(2k-2)!\omega_{\z_0}\delta_{\z_0,\z}\delta_{n=k-1+m} &\text{if }0\leq n\leq \min(n_0,2k-2),\\
2\omega_{\z_0}
\frac{n!}{(n+1-2k)!}c_{m,\z}^{\z_0,+}(n)&\text{if }n\geq 2k-1.
\end{cases}
\end{multline}
This constant appears for a more general $F\in \mathscr{H}_{2-2k}$ in \cite[Proposition 2.3]{BJK}. We have the following relationship between the elliptic coefficients.
\begin{lemma}\label{lem:xiDelliptic}
\noindent

\noindent
\begin{enumerate}[leftmargin=*,label={\rm(\arabic*)}]
\item
We have
\begin{align*}
\nonumber &\xi_{2-2k}\left(\mathbb{P}_{2-2k,k-1-m}^{\z_0}(z)\right)\\\nonumber&\hspace{50pt}=\frac{(4\y)^{2k-1}2\omega_{\z_0}}{(z-\overline{\z})^{2k}}\sum_{n\leq n_0} \left(\overline{c_{-m,\z}^{\z_0,-}(n)}+\delta_{\z,\z_0}\delta_{-m>k-1}\delta_{n=k-1-m}\right)X_{\z}^{-n-1}(z),\\
\label{eqn:DEllipticExp} &D^{2k-1}\left(\mathbb{P}_{2-2k,k-1+m}^{\z_0}(z)\right)=\left(\frac{\y}{\pi}\right)^{2k-1}(z-\overline{\z})^{-2k}\sum_{n\geq -n_0} b_{m,\z}^{\z_0}(n) X_{\z}^{n+1-2k}(z).
\end{align*}

\item 
For $n\leq n_0$, we have 
\[
\overline{c_{-m,\z}^{\z_0,-}(n)}+\delta_{\z,\z_0}\delta_{-m>k-1}\delta_{n=k-1-m}= \left(\frac{\y_0}{\y}\right)^{2k-1}\left(c_{m-k,\z}^{\z_0}(-n-1)+\delta_{\z,\z_0}\delta_{n=k-1-m}\right). 
\]
\item  For $n\geq -n_0$ we have, where here and throughout for $\z_j\in\H$ we write $\z_j=\x_j+i\y_j$,
\[
 b_{m,\z}^{\z_0}(n)= -(2k-2)!\left(\frac{\y_0}{\y}\right)^{2k-1}2\omega_{\z_0}\left(c_{m-k,\z}^{\z_0}(n+1-2k)+\delta_{\z,\z_0}\delta_{n=k-1+m}\right).
\]

\end{enumerate}
\end{lemma}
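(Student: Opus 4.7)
The plan is to prove part (1) by applying $\xi_{2-2k}$ and $D^{2k-1}$ termwise to the elliptic expansion \eqref{eqn:expandPgen} of $\mathbb{P}_{2-2k,k-1\mp m}^{\z_0}$ around $\z$, and then deduce parts (2) and (3) by comparing with the alternate form coming from Lemma \ref{lem:PPsiRel} and the displayed elliptic expansion of $\Psi_{2k,m}^{\z}$.

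For the $\xi_{2-2k}$ formula in part (1), I would first establish the termwise identity
\[
\xi_{2-2k}\!\left((z-\overline{\z})^{2k-2}\beta_0(1-r_\z^2(z);\,2k-1,-n)X_\z^n(z)\right) = \frac{(4\y)^{2k-1}}{(z-\overline{\z})^{2k}}\,X_\z^{-n-1}(z)
\]
by a direct chain-rule calculation, exploiting the identities $1-r_\z^2 = 4y\y/|z-\overline{\z}|^2$ and $\partial_{\overline z}\overline{X_\z} = -2i\y/(\overline z-\z)^2$. Since $\xi_{2-2k}$ annihilates the holomorphic-in-$z$ contributions and is conjugate-linear in the coefficients, summing its action over the $c^-$ terms and the $\beta_0$-type contribution of the principal part produces the first formula, with the $\delta$-term encoding the principal-part piece. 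For the $D^{2k-1}$ formula, I would expand $(z-\overline{\z})^{2k-2}X_\z^n = (z-\z)^n(z-\overline{\z})^{2k-2-n}$ and apply $\partial_z^{2k-1}$ via Leibniz; the binomial identity $\sum_{j=0}^{2k-1}\binom{2k-1}{j}(-1)^{2k-1-j}X_\z^{n-j} = X_\z^{n+1-2k}(1-X_\z)^{2k-1}$ together with $1-X_\z = 2i\y/(z-\overline{\z})$ collapses the result to a single $X_\z^{n+1-2k}$ term. This kills $0\le n\le 2k-2$ (a polynomial of degree $2k-2$ in $z$) and produces the factorial prefactors $n!/(n+1-2k)!$ and $-(-n+2k-2)!/(-n-1)!$ appearing in \eqref{eqn:bdef} for $n\geq 2k-1$ and $n\le -1$ respectively; a parallel calculation on the $\beta_0$ basis handles the $\beta_0$ part of the principal part and yields the middle case of $b_{m,\z}^{\z_0}$.

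For parts (2) and (3), I would compute these same two functions a second way: by Lemma \ref{lem:PPsiRel}(1),(2),
\[
\xi_{2-2k}\!\bigl(\mathbb{P}_{2-2k,k-1-m}^{\z_0}\bigr) = (4\y_0)^{2k-1}\Psi_{2k,m-k}^{\z_0},\quad D^{2k-1}\!\bigl(\mathbb{P}_{2-2k,k-1+m}^{\z_0}\bigr) = -(2k-2)!\!\left(\tfrac{\y_0}{\pi}\right)^{2k-1}\!\Psi_{2k,m-k}^{\z_0}.
\]
Expanding $\Psi_{2k,m-k}^{\z_0}$ around $\z$ via the formula displayed just before the lemma and matching coefficients of $X_\z^{-n-1}$ (for part (2)) or $X_\z^{n+1-2k}$ (for part (3)) against the expressions from part (1) gives parts (2) and (3). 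The ratio $(\y_0/\y)^{2k-1}$ arises because part (1) is normalised using $\y = \im(\z)$ while the Poincar\'e identity naturally produces the center factor $\y_0 = \im(\z_0)$, and the Kronecker correction $\delta_{\z,\z_0}\delta_{n=k-1\mp m}$ encodes the seed term $\delta_{\z,\z_0}X_\z^{m-k}$ in the expansion of $\Psi_{2k,m-k}^{\z_0}$.

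The main obstacle is the bookkeeping in part (1): one must carefully track which terms of \eqref{eqn:expandPgen} contribute (principal part using $\mathcal{C}$ versus $\beta_0$, versus the regular $c^\pm$ coefficients) and reconcile the factorial constants from iterated differentiation with the three cases of \eqref{eqn:bdef}. Once part (1) is correctly set up, parts (2) and (3) reduce to routine coefficient matching.
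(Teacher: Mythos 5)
Your strategy for parts (2) and (3) is exactly the paper's: compute $\xi_{2-2k}\bigl(\mathbb{P}^{\z_0}_{2-2k,k-1-m}\bigr)$ and $D^{2k-1}\bigl(\mathbb{P}^{\z_0}_{2-2k,k-1+m}\bigr)$ a second time via Lemma \ref{lem:PPsiRel} (1), (2), expand the resulting multiple of $\Psi^{\z_0}_{2k,m-k}$ around $\z$, and match coefficients; this is indeed where the factor $(\y_0/\y)^{2k-1}$ and the Kronecker corrections come from. The only divergence is in part (1): the paper notes that the polynomial terms $(z-\overline{\z})^{2k-2}X_{\z}^n(z)$, $0\leq n\leq 2k-2$, are annihilated by both operators (so the $\beta_0$-normalization is harmless) and then simply quotes \cite[Proposition 2.3]{BJK}, whereas you re-derive that proposition by termwise differentiation. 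Your termwise identities are correct -- the $\xi_{2-2k}$ computation using $1-r_\z^2(z)=4y\y/|z-\overline{\z}|^2$ checks out, and the Leibniz/binomial collapse does give the prefactors $\frac{n!}{(n+1-2k)!}$ for $n\geq 2k-1$ and $-\frac{(2k-2-n)!}{(-n-1)!}$ for $n\leq -1$ -- so your route buys a self-contained argument at the cost of redoing the cited computation.

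One step your sketch omits and which is not automatic: for the $D^{2k-1}$ formula you also need that the entire non-meromorphic sum with $n\leq -1$ in \eqref{eqn:expandPgen} (the $c^{\z_0,-}_{m,\z}$-terms) is annihilated by $D^{2k-1}$; otherwise it would produce contributions to $b^{\z_0}_{m,\z}(n)$ for $n<0$ beyond those in \eqref{eqn:bdef}. This is true, but only because of the constant $\mathcal{C}_{2k-1,-n}$ built into $\beta_0$: for example, with $k=2$ and $n=-1$ one finds that $D^{3}\bigl((z-\overline{\z})^{2}\beta\bigl(1-r_{\z}^2(z);3,1\bigr)X_{\z}^{-1}(z)\bigr)$ is a nonzero multiple of $(z-\z)^{-4}$, which is cancelled precisely by $D^{3}$ of the subtracted term $\mathcal{C}_{3,1}(z-\overline{\z})^{2}X_{\z}^{-1}(z)$. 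So your ``parallel calculation on the $\beta_0$ basis'' must cover the negative-index $\beta_0$ terms (showing they die), not only the seed term with index in $[0,2k-2]$ that yields the middle case of \eqref{eqn:bdef}. With that check added, your part (1) matches the content of \cite[Proposition 2.3]{BJK} and the rest of the argument goes through as in the paper.
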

 \begin{proof}
(1) Note that for  $0\leq n\leq 2k-2$,  $(z-\overline{\z})^{2k-2}X_{\z}^n(z)$
is a polynomial in $z$ of degree at most $2k-2$. Therefore it is annihilated by both $\xi_{2-2k}$ and $D^{2k-1}$, so the change in the definition of the elliptic expansion \eqref{eqn:elliptic} in comparison with \cite[Proposition 2.2]{BJK} does not change the elliptic expansions of the images under $\xi_{2-2k}$ or $D^{2k-1}$ of a polar harmonic Maass form. 
The claim then follows directly by plugging in \cite[Proposition 2.3]{BJK}. \\
\rm
\noindent
(2) The claim follows directly from part (1) and Lemma \ref{lem:PPsiRel} (1).\\
\noindent
(3) The claim follows directly from part (1) and Lemma \ref{lem:PPsiRel} (2). 
\rm
\end{proof}

\
\subsection{Spaces of polar harmonic Maass forms and differential operators}
Consider the following subspaces of $\mathscr{H}_{2-2k}^{\z}$:
\begin{equation}\label{eqn:Hcuspdef}
\mathscr{H}_{2-2k}^{\z,\operatorname{cusp}}:=\bigoplus_{m\geq k} \C \mathbb{P}_{2-2k,k-1-m}^{\z}, \qquad \mathscr{H}_{2-2k}^{\z,\mathbb{E}}:=\bigoplus_{|m|<k} \C \mathbb{P}_{2-2k,k-1+m}^{\z}.
\end{equation}
The definitions of these spaces are motivated by the following lemma. 
\begin{lemma}\label{lem:DXicusp}
Let $F\in\mathscr{H}_{2-2k}$ be given.

\begin{enumerate}[leftmargin=*,label={\rm(\arabic*)}]
\item
We have $\xi_{2-2k}(F)\in S_{2k}$ if and only if $F\in \mathscr{H}_{2-2k}^{\operatorname{cusp}}$. Moreover, $\xi_{2-2k}$ is surjective onto $S_{2k}$. 
\item
We have $D^{2k-1}(F)\in \mathbb{D}_{2k}$ if and only if $F\in \mathscr{H}_{2-2k}^{\operatorname{cusp}}\oplus \ker(D^{2k-1})$. Moreover, $D^{2k-1}$ is surjective onto $\mathbb{D}_{2k}$. 
\item 
For $F\in\mathscr{H}_{2-2k}^{\z,\mathbb{E}}$, we have $\xi_{2-2k}(F), D^{2k-1}(F)\in \mathbb{E}_{2k}^{\z}$. Moreover, both $\xi_{2-2k}$ and $D^{2k-1}$ are surjective restricted to $\mathscr{H}_{2-2k}^{\mathbb{E}}$.
\end{enumerate}
\end{lemma}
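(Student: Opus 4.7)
The strategy is to reduce everything to linear algebra on the Poincar\'e series basis. By \cite[Theorem 4.3]{BJK}, every $F\in\mathscr{H}_{2-2k}$ has an expansion
\[
F=\sum_{n\in\Z,\z\in\H}c_{n,\z}\mathbb{P}_{2-2k,n}^{\z}
\]
with only finitely many nonzero terms. I would split this as $F=F_{\mathrm{cusp}}+F_{\mathbb{E}}+F_{\mathrm{III}}$ according to whether $n\leq -1$ (with $n=k-1-m$, $m\geq k$, so $F_{\mathrm{cusp}}\in\mathscr{H}_{2-2k}^{\operatorname{cusp}}$), $0\leq n\leq 2k-2$ (with $n=k-1+m$, $|m|<k$, so $F_{\mathbb{E}}\in\mathscr{H}_{2-2k}^{\mathbb{E}}$), or $n\geq 2k-1$ (the third piece $F_{\mathrm{III}}$). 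Computing term-by-term from Lemma \ref{lem:PPsiRel}(1),(2) then produces the bookkeeping
\begin{align*}
\xi_{2-2k}:\quad & F_{\mathrm{cusp}}\mapsto S_{2k},\quad F_{\mathbb{E}}\mapsto\mathbb{E}_{2k},\quad F_{\mathrm{III}}\mapsto\mathbb{D}_{2k},\\
D^{2k-1}:\quad & F_{\mathrm{cusp}}\mapsto\mathbb{D}_{2k},\quad F_{\mathbb{E}}\mapsto\mathbb{E}_{2k},\quad F_{\mathrm{III}}\mapsto S_{2k}.
\end{align*}
Moreover, Lemma \ref{lem:genS2k}(2) identifies, at each fixed $\z$, the images of the basis of $\mathscr{H}_{2-2k}^{\z,\mathbb{E}}$ under either operator with the full basis $\{\Psi_{2k,j}^{\z}:1-2k\leq j\leq -1\}$ of $\mathbb{E}_{2k}^{\z}$, and Lemma \ref{lem:genS2k}(3) identifies $\xi_{2-2k}$ on the third piece at $\z$ with the basis of $\mathbb{D}_{2k}^{\z}$. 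In particular both operators are injective on $\mathscr{H}_{2-2k}^{\mathbb{E}}$, and $\xi_{2-2k}$ is injective on the third piece.

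Parts (1) and (2) now follow a common template. Extending the decomposition \eqref{eqn:S2ksplitting} over representatives of $\SL_2(\Z)\backslash\H$ yields $\mathbb{S}_{2k}=S_{2k}\oplus\mathbb{E}_{2k}\oplus\mathbb{D}_{2k}$ with trivial pairwise intersections. Thus $\xi_{2-2k}(F)\in S_{2k}$ forces the $\mathbb{E}$- and $\mathbb{D}$-components of $\xi_{2-2k}(F)$ to vanish; by the injectivities above this gives $F_{\mathbb{E}}=0=F_{\mathrm{III}}$ and hence $F\in\mathscr{H}_{2-2k}^{\operatorname{cusp}}$. The converse and the surjectivity of $\xi_{2-2k}:\mathscr{H}_{2-2k}^{\operatorname{cusp}}\to S_{2k}$ are immediate from the table and Lemma \ref{lem:genS2k}(1). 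For (2), $D^{2k-1}(F)\in\mathbb{D}_{2k}$ similarly forces the $S_{2k}$- and $\mathbb{E}_{2k}$-components of $D^{2k-1}(F)$ to vanish; injectivity on $\mathscr{H}_{2-2k}^{\mathbb{E}}$ kills $F_{\mathbb{E}}$, while on the third piece one only gets $F_{\mathrm{III}}\in\ker(D^{2k-1})$ (the kernel is nontrivial because $D^{2k-1}$ on the third piece maps into the finite-dimensional $S_{2k}$). Hence $F\in\mathscr{H}_{2-2k}^{\operatorname{cusp}}\oplus\ker(D^{2k-1})$, with triviality of the intersection verified from the Poincar\'e series expansion via Lemma \ref{lem:genS2k}(3); the converse is immediate from Lemma \ref{lem:PPsiRel}(3), and surjectivity onto $\mathbb{D}_{2k}$ is Lemma \ref{lem:PPsiRel}(2) applied to $\mathscr{H}_{2-2k}^{\operatorname{cusp}}$.

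For part (3), I would simply apply Lemma \ref{lem:PPsiRel}(1),(2) to the basis $\{\mathbb{P}_{2-2k,k-1+m}^{\z}:|m|<k\}$ of $\mathscr{H}_{2-2k}^{\z,\mathbb{E}}$: both operators send it to constant multiples of $\{\Psi_{2k,-k-m}^{\z}:|m|<k\}$ and $\{\Psi_{2k,m-k}^{\z}:|m|<k\}$, whose index sets both equal $\{1-2k,\ldots,-1\}$ and hence, by Lemma \ref{lem:genS2k}(2), span $\mathbb{E}_{2k}^{\z}$. A dimension count upgrades each to a bijection $\mathscr{H}_{2-2k}^{\z,\mathbb{E}}\to\mathbb{E}_{2k}^{\z}$, and summing over $\z$ gives surjectivity onto $\mathbb{E}_{2k}$. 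The main obstacle in the whole proof is the direct-sum separation used in (1) and (2): one must confirm that the pieces coming from different equivalence classes of $\z$ cannot cancel each other across $\mathbb{E}_{2k}$ or $\mathbb{D}_{2k}$, so that vanishing of an $\mathbb{E}$- or $\mathbb{D}$-component really forces the corresponding Poincar\'e coefficients to vanish at each $\z$ separately. This delicacy is the same one implicit in the proof of Lemma \ref{lem:PPsiRel}(3), and it is resolved by applying the basis statements of Lemma \ref{lem:genS2k}(2),(3) one equivalence class at a time.
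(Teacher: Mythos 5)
Your proposal is correct in outline and follows essentially the paper's route: the same three-range split of the Poincar\'e series expansion (your $F_{\mathrm{cusp}},F_{\mathbb{E}},F_{\mathrm{III}}$ are the paper's $F_1,F_2,F_3$), the same bookkeeping via Lemma \ref{lem:PPsiRel} (1), (2) and Lemma \ref{lem:genS2k}, the same surjectivity arguments, and part (3) is argued identically. The one genuine divergence is the mechanism at the key vanishing step, and it is exactly the point you flag as the ``main obstacle.'' The paper does not separate the $\mathbb{E}$- and $\mathbb{D}$-components of $\xi_{2-2k}(F)$ and does not invoke linear independence of the $\Psi_{2k,m}^{\z}$ across inequivalent points: it only uses $(\mathbb{E}_{2k}\oplus\mathbb{D}_{2k})\cap S_{2k}=\{0\}$ to conclude $\xi_{2-2k}(F_2+F_3)=0$, hence $F_2+F_3\in\ker\xi_{2-2k}\cap\mathscr{H}_{2-2k}=\mathbb{S}_{2-2k}$, and then reads off from \eqref{eqn:expandPgen} that a nonzero element of the span of these $\mathbb{P}_{2-2k,n}^{\z}$ (indices $n\geq 0$) has a \emph{non-meromorphic} principal part, so it cannot lie in $\mathbb{S}_{2-2k}$; this sidesteps the cross-$\z$ cancellation issue entirely (and similarly in part (2), where the paper works with $F_2+F_3$ jointly and uses that $(\mathbb{E}_{2k}\perp S_{2k})\cap\mathbb{D}_{2k}$ is trivial). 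Your alternative route, via injectivity of $\xi_{2-2k}$ and $D^{2k-1}$ on the spans of meromorphic Poincar\'e series, is viable, but ``applying Lemma \ref{lem:genS2k} (2), (3) one equivalence class at a time'' does not by itself yield the needed independence across inequivalent $\z$: you should add the short supplementary argument that if a relation $\sum_{\z}g_{\z}=0$ holds with $g_{\z}\in\mathbb{E}_{2k}^{\z}\oplus\mathbb{D}_{2k}^{\z}$, then each $g_{\z}$ has vanishing principal part at $\z$ (the other summands are holomorphic there), hence lies in $S_{2k}$ while being orthogonal to $S_{2k}$, so $g_{\z}=0$ by positive-definiteness, after which the fixed-$\z$ basis statements finish the job. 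With that observation supplied, your version of parts (1) and (2) closes, and the remaining steps (converse directions, triviality of $\mathscr{H}_{2-2k}^{\operatorname{cusp}}\cap\ker(D^{2k-1})$, surjectivity onto $S_{2k}$, $\mathbb{D}_{2k}$, and $\mathbb{E}_{2k}^{\z}$) match the paper.
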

\begin{proof}
(1) We may write
\begin{equation*}%\label{eqn:Farbitrary2}
F= \sum_{m\in\Z} \sum_{\z\in\H} c_{k-1-m,\z} \mathbb{P}_{2-2k,k-1-m}^{\z}. 
\end{equation*}
 By Lemma \ref{lem:PPsiRel} (1), we have 
\[
\xi_{2-2k}(F)= \sum_{m\in\Z} \sum_{\z\in\H} \overline{c_{k-1-m,\z}} (4\y)^{2k-1} \Psi_{2k,m-k}^{\z}. 
\]
We then split $F=F_1+F_2+F_3$ with
\begin{align*}
F_1&:= \sum_{m\geq k} \sum_{\z\in\H} c_{k-1-m,\z} \mathbb{P}_{2-2k,k-1-m}^{\z}\in\mathscr{H}_{2-2k}^{\operatorname{cusp}},\quad
F_2:= \sum_{-k<m<k} \sum_{\z\in\H} c_{k-1-m,\z} \mathbb{P}_{2-2k,k-1-m}^{\z},\\
F_3&:= \sum_{m\leq -k} \sum_{\z\in\H} c_{k-1-m,\z} \mathbb{P}_{2-2k,k-1-m}^{\z}. 
\end{align*}
Lemma \ref{lem:PPsiRel} (1) and Lemma \ref{lem:genS2k} (1) then imply that $\xi_{2-2k}(F_1)\in S_{2k}$, and hence $\xi_{2-2k}(F)\in S_{2k}$ if and only if $\xi_{2-2k}(F_2+F_3)\in S_{2k}$. 
By Lemma \ref{lem:PPsiRel} (1) and Lemma \ref{lem:genS2k} (2), (3), we have $\xi_{2-2k}(F_2+F_3)\in \mathbb{E}_{2k}\oplus \mathbb{D}_{2k}$. Since $\mathbb{E}_{2k}\oplus \mathbb{D}_{2k} \cap S_{2k}=\lbrace 0 \rbrace$, we require that $\xi_{2-2k}(F_2+F_3)=0$. The kernel of $\xi_{2-2k}$ inside $\mathscr{H}_{2-2k}$ is $\mathbb{S}_{2-2k}$ and \eqref{eqn:expandPgen} implies that the principal part of $F_2+F_3$ is non-meromorphic if it is nonzero. Thus $F_2+F_3\in\mathbb{S}_{2-2k}$ if and only if $F_2+F_3=0$. 

\noindent (2) 
We again split $F=F_1+F_2+F_3$ with $F_j$ as in part (1). Since $\xi_{2-2k}(F_1)\in S_{2k}$ by part (1), Lemma \ref{lem:PPsiRel} (3) implies that $D^{2k-1}(F_1)\in \mathbb{D}_{2k}$. Thus $D^{2k-1}(F)\in \mathbb{D}_{2k}$ if and only if $D^{2k-1}(F_2+F_3)=D^{2k-1}(F-F_1)\in \mathbb{D}_{2k}$. Since the projection of $F_2+F_3$ to $\mathscr{H}_{2-2k}^{\operatorname{cusp}}$ is trivial, the first claim is equivalent to showing that $D^{2k-1}(F_2+F_3)\in \mathbb{D}_{2k}$ if and only if $F_2+F_3\in \ker(D^{2k-1})$. Lemma \ref{lem:PPsiRel} (2) implies that 
\[
D^{2k-1}(F_2+F_3)=-(2k-2)! \sum_{m<k} \sum_{\z\in\H} c_{k-1+m,\z} \left(\frac{\y}{\pi}\right)^{2k-1} \Psi_{2k,-m-k}^{\z}. 
\]
By Lemma \ref{lem:genS2k} (1), (2), $D^{2k-1}(F_2+F_3)\in \mathbb{E}_{2k}\perp S_{2k}$. 
 Since the intersection of $\mathbb{E}_{2k}\perp S_{2k}$ with $\mathbb{D}_{2k}$ is trivial by the splitting in \eqref{eqn:S2ksplitting},  $D^{2k-1}(F_2+F_3)\in \mathbb{D}_{2k}$ if and only if $F_2+F_3\in \ker(D^{2k-1})$. 

The surjectivity of the map $D^{2k-1}$ follows from Lemma \ref{lem:PPsiRel} (2) and Lemma \ref{lem:genS2k} (3) by taking a spanning set $F=\mathbb{P}_{2-2k,k-1-m}^{\z}$ with $m\geq k$ and $\z\in\H$.

\noindent
(3)
An arbitrary element of $\mathscr{H}_{2-2k}^{\z,\mathbb{E}}$ is of the form 
\begin{equation*}\label{eqn:FE2k}
F=  \sum_{0\leq m\leq 2k-2} c_{m,\z} \mathbb{P}_{2-2k,m}^{\z}. 
\end{equation*}
Lemma \ref{lem:PPsiRel} (1) and Lemma \ref{lem:genS2k} (2) imply  that $\xi_{2-2k}(F)\in \mathbb{E}_{2k}^{\z}$, while Lemma \ref{lem:genS2k} (2) and Lemma \ref{lem:PPsiRel} (2) imply that $D^{2k-1}(F)\in \mathbb{E}_{2k}^{\z}$. Lemma \ref{lem:genS2k} (2) furthermore implies that both of these maps are surjective. \qedhere
\end{proof}

\section{Inner products with Poincar\'e series}\label{sec:innerPoincare}
We recall the regularization from \cite[Section 3.2]{BKvP} for meromorphic cusp forms. For $f,g\in\mathbb{S}_{2k}$ with poles at $\mathfrak{z}_\ell (1\leq \ell\leq r)\in \SL_2(\Z)\backslash\H$, we choose a fundamental domain $\mathcal{F}^*$ such that $\mathfrak{z}_\ell\in\mathcal{F}^*$ (also denoted by $\mathfrak{z}_{\ell}$) all lie in the interior of $\Gamma_{\mathfrak{z}_\ell}
\mathcal{F}^*
$.

For an analytic function $A(\bm{s})$ in $\bm{s}=(s_1,\dots,s_r)$, denote by $\mathrm{CT}_{\bm{s}=\bm{0}}A(\bm{s})$  the constant term of the meromorphic continuation of $A(\bm{s})$ around $\bm s=\bm 0$, and define
\begin{equation}\label{eqn:OurReg}
\left<f,g\right>:= \operatorname{CT}_{\bm{s}=0}\left(\int_{\SL_2(\Z)\backslash \H} f(z) H_{\bm{s}}(z) \overline{g(z)} y^{2k}\frac{dx dy}{y^2}\right),
\end{equation}
where
\[
H_{\bm{s}} (z) = H_{s_1,\dots, s_r, \mathfrak{z}_1,\dots,\mathfrak{z}_r} (z) := \prod_{\ell=1}^r h_{s_{\ell},\mathfrak{z}_\ell} (z).
\]
Here for $\mathfrak{z}_\ell\in \mathcal{F}^*$ and $z\in\mathbb{H}$ we set $h_{s_{\ell},\mathfrak{z}_\ell}(z):=r_{\mathfrak{z}_\ell}^{2s_{\ell}}(\gamma z)$, 
with $\gamma \in \SL_2(\Z)$ such that $\gamma z\in \mathcal{F}^*$.  Note that  $r_{\mathfrak{z}_\ell}(\gamma z)\to 0$ as $z\to \gamma^{-1}\mathfrak{z}_\ell$, so the integral in \eqref{eqn:OurReg} converges for $\bm{\sigma}\gg\bm{0}$, where this notation means that for every $1\leq \ell\leq r$, $\sigma_{\ell}:=\re(s_{\ell})\gg 0$.  One can show that the regularization is independent of the choice of fundamental domain. Proceeding as in the proof of  \cite[Theorem 6.1]{BKvP} a lengthy calculation gives the following lemma.
\begin{lemma}\label{lem:innermero}
If $f\in \mathbb{E}_{2k}\oplus\mathbb{D}_{2k}$ and $F\in \mathscr{H}_{2-2k}$ satisfies $\xi_{2-2k}(F)=f$, then for all $\ell\in\Z$ 
\begin{equation*}%\label{eqn:innerPsi}
\left<\Psi_{2k,\ell}^{\z},f\right>=\frac{2\pi}{\y}\delta_{\ell\leq-1} c_{F,\z}^+(-\ell-1).
\end{equation*}
\end{lemma}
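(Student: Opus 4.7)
The plan is to split on the sign of $\ell$. For $\ell \geq 0$, Lemma \ref{lem:genS2k}(1) gives $\Psi_{2k,\ell}^\z \in S_{2k}$; since $f \in \mathbb{E}_{2k} \oplus \mathbb{D}_{2k}$ is orthogonal to $S_{2k}$ by the very definitions of $\mathbb{E}_{2k}^\z$ and $\mathbb{D}_{2k}^\z$, Hermitian symmetry of the regularized inner product gives $\left<\Psi_{2k,\ell}^\z, f\right> = 0$, which accounts for the factor $\delta_{\ell \leq -1}$. For $\ell \leq -1$, I would mimic the unfolding-then-Stokes approach used in the proof of \cite[Theorem 6.1]{BKvP}.

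The first step is to unfold the Poincar\'e series: using the $\SL_2(\Z)$-invariance of $\overline{f(z)} H_{\bm{s}}(z) y^{2k-2}$, the sum $\sum_{M \in \SL_2(\Z)} \psi_{2k,\ell}^\z|_{2k}M$ unfolds the integral over $\SL_2(\Z) \backslash \H$ into an integral over $\H$ of $\psi_{2k,\ell}^\z(z) \overline{f(z)} h_{s,\z}(z) y^{2k-2}\, dx\, dy$, while the regularizing factors $h_{s_j,\z_j}$ associated to the other poles $\z_j$ of $f$ unfold to $\SL_2(\Z)$-translates whose total contribution is handled as in \cite{BKvP}. Next, using $\xi_{2-2k}(F) = f$ I would rewrite $\overline{f(z)} y^{2k-2}\, dx\, dy = F_{\overline{z}}(z)\, dz \wedge d\overline{z}$; since $\psi_{2k,\ell}^\z$ is holomorphic in $z$, the integrand decomposes as
\[
-d\bigl(\psi_{2k,\ell}^\z(z) h_{s,\z}(z) F(z)\, dz\bigr) - \psi_{2k,\ell}^\z(z) F(z) \frac{\partial h_{s,\z}(z)}{\partial \overline{z}}\, dz \wedge d\overline{z}.
\]

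Stokes' theorem converts the first piece into a boundary contribution on the fundamental domain (which cancels by invariance) plus a small-contour integral around $\z$ capturing $2\pi i$ times the residue of $\psi_{2k,\ell}^\z \cdot F$ at $\z$. Using the elliptic expansion \eqref{eqn:elliptic} of $F$, the product $\psi_{2k,\ell}^\z(z) F_\z^+(z)$ equals $\sum_n c_{F,\z}^+(n)(z-\z)^{n+\ell}(z-\overline{\z})^{-(n+\ell)-2}$, so only the index $n = -\ell-1$ (which lies in the ``holomorphic range'' $n \geq 0$ precisely because $\ell \leq -1$) contributes to the $(z-\z)^{-1}$ coefficient, and evaluating $(z-\overline{\z})^{-1}$ at $z = \z$ gives residue $(2i\y)^{-1} c_{F,\z}^+(-\ell-1)$. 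Multiplying by $2\pi i$ and tracking the signs/constants from the unfolding yields the claimed $\frac{2\pi}{\y} c_{F,\z}^+(-\ell-1)$.

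The main obstacle is the regularization bookkeeping at $\bm{s} = \bm{0}$: one must verify that the correction from $\partial h_{s,\z}/\partial \overline{z}$, the non-meromorphic contribution coming from $F_\z^-$ (whose $\beta_0$-factor vanishes at $\z$ but interacts nontrivially with the pole of $\psi_{2k,\ell}^\z$), and the unfolded regularizing factors for the other poles of $f$ all have vanishing constant term as $\bm{s} \to \bm{0}$. This is the ``lengthy calculation'' referenced by the authors, carried out in detail in \cite[Theorem 6.1]{BKvP}, with modifications to accommodate $\Psi_{2k,\ell}^\z$ as the test function in place of the specific forms considered there.
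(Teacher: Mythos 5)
Your proposal is correct in outline and follows essentially the same route as the paper, whose entire proof consists of the remark that one proceeds as in \cite[Theorem 6.1]{BKvP}: regularized unfolding of $\Psi_{2k,\ell}^{\z}$, Stokes' theorem applied via a $\xi_{2-2k}$-preimage $F$, and extraction of the elliptic coefficient $c_{F,\z}^{+}(-\ell-1)$, with the case $\ell\geq 0$ disposed of by the orthogonality to $S_{2k}$ that is built into the definitions of $\mathbb{E}_{2k}$ and $\mathbb{D}_{2k}$. The one bookkeeping point to keep in mind is that, because $\z$ carries the regularizing factor $h_{s,\z}$, the $\varepsilon$-circle around $\z$ actually vanishes for $\re(s)$ large, and $c_{F,\z}^{+}(-\ell-1)$ instead emerges from the pole at $s=0$ of the $\frac{\partial}{\partial\overline{z}}h_{s,\z}$ term; the resulting value and constants agree with your residue computation, and this is precisely the ``lengthy calculation'' you correctly defer to \cite{BKvP}.
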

For $\ell\geq 0$, we recover the fact that $\mathbb{E}_{2k}\oplus \mathbb{D}_{2k}$ is orthogonal to cusp forms from Lemma \ref{lem:innermero} and Lemma \ref{lem:genS2k} (1). Lemma \ref{lem:innermero} does not yield the value of the inner product between different elements of $S_{2k}$. For this we require the Petersson coefficient formula for elliptic Poincar\'e series. Using the notation in \eqref{eqn:fellexp}, Petersson \cite[Satz 9]{Pe2} proved the following.
\begin{lemma}\label{lem:PeterssonCoeff}
If $f\in S_{2k}$, then for $n\in\N_0$ we have
\begin{equation*}%\label{eqn:PeterssonEllipticCoeff}
\left\langle f,\Psi_{2k,n}^{\z}\right\rangle= \frac{8\pi(2k-2)!n!}{\left(4\y\right)^{2k}(2k-1+n)!}c_{f,\z}(n).
\end{equation*}
In particular, if $\z=\z_1$ and $f=\Psi_{2k,m}^{\z_2}$ with $m\in\N_0$, then
\[
\left\langle\Psi_{2k,m}^{\z_2},\Psi_{2k,n}^{\z_1}\right\rangle =  \frac{8\pi(2k-2)!n!}{\left(4\y_1\right)^{2k}(2k-1+n)!}2\omega_{\z_2}\left(c_{m,\z_1}^{\z_2}(n)+\delta_{\z_1,\z_2}\delta_{m=n}\right).
\]
\end{lemma}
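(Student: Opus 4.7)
The plan is to follow Petersson's classical unfolding argument, then localize around $\z$ by passing to the elliptic coordinate $X_{\z}$. First I would write
\[
\langle f,\Psi_{2k,n}^{\z}\rangle = \int_{\SL_2(\Z)\backslash\H} f(z)\,\overline{\Psi_{2k,n}^{\z}(z)}\,y^{2k-2}\,dx\,dy,
\]
which converges absolutely because $f$ is a cusp form and $n\geq 0$ implies that $\Psi_{2k,n}^{\z}$ has no principal part at $\z$. Since $f(z)\overline{g(z)}y^{2k}$ is $\SL_2(\Z)$-invariant for weight $2k$ forms and $dx\,dy/y^2$ is hyperbolic-invariant, the standard unfolding trick applied to $\Psi_{2k,n}^{\z}=\sum_{M}\psi_{2k,n}^{\z}|_{2k}M$ collapses the integral to one over $\H$. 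Because the sum runs over all of $\SL_2(\Z)$, each coset of $\Gamma_{\z}\backslash\SL_2(\Z)$ contributes $\omega_{\z}$ identical terms after the slash, producing a factor $\omega_{\z}$ that must be carried through.

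Next I would change variables from $(x,y)$ to the elliptic coordinate $X=X_{\z}(z)$ using the identities $z-\overline{\z}=-2i\y/(X-1)$ and $y=\y(1-|X|^2)/|X-1|^2$. These allow one to rewrite $\psi_{2k,n}^{\z}(z)=(z-\overline{\z})^{-2k}X_{\z}^n(z)$ cleanly and to compute the Jacobian so that $y^{2k-2}dx\,dy$ becomes a power of $(1-|X|^2)$ times a real measure on the disk $|X|<1$. Writing $X=re^{i\theta}$ and substituting the elliptic expansion \eqref{eqn:fellexp} of $f$ (which has only $n\geq 0$ terms because $f\in S_{2k}$), the integrand splits into a radial part depending on $r\in[0,1)$ and angular factors $e^{i(m-n)\theta}$. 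Integration in $\theta$ kills every term except $m=n$, leaving precisely the factor $c_{f,\z}(n)$ together with a pure radial integral.

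That radial integral is a beta integral,
\[
\int_0^1 r^{2n+1}(1-r^2)^{2k-2}\,dr=\frac{n!\,(2k-2)!}{2\,(n+2k-1)!},
\]
and combining it with the $\y$-powers arising from $(z-\overline{\z})^{-2k}$ and from the Jacobian, and with the unfolding factor $\omega_{\z}$, assembles into the claimed constant $8\pi(2k-2)!n!/((4\y)^{2k}(2k-1+n)!)$. For the ``in particular'' clause, I would specialize to $f=\Psi_{2k,m}^{\z_2}$, which lies in $S_{2k}$ by Lemma \ref{lem:genS2k}(1), and read off $c_{f,\z_1}(n)=2\omega_{\z_2}(\delta_{\z_1,\z_2}\delta_{m=n}+c_{m,\z_1}^{\z_2}(n))$ directly from the elliptic expansion of $\Psi_{2k,m}^{\z_2}$ displayed in the paragraph preceding Lemma \ref{lem:xiDelliptic}.

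The conceptual content is just unfolding plus a beta integral in elliptic coordinates; the main obstacle is bookkeeping: the $\omega_{\z}$ that the unfolding produces, the Jacobian of $(x,y)\mapsto X$, and the various powers of $2\y$ coming out of $z-\overline{\z}=-2i\y/(X-1)$ and $y=\y(1-|X|^2)/|X-1|^2$ must combine to produce the precise normalization, and it is in tracking these factors (including a possible $\omega_{\z}$ built into $\Psi_{2k,n}^{\z}$ itself) that sign and $2$-power errors are most likely to arise.
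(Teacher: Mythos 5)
Your overall route---unfold the Poincar\'e series, pass to the elliptic coordinate $X=X_{\z}(z)$, and evaluate a beta integral---is exactly Petersson's original argument. Note that the paper does not reprove this lemma at all: it cites \cite[Satz 9]{Pe2} for the coefficient formula, and the ``in particular'' clause is obtained, just as you do, by inserting the elliptic expansion of $\Psi_{2k,m}^{\z_2}$ quoted from \cite[(2.26)]{BKvP} to read off $c_{f,\z_1}(n)=2\omega_{\z_2}\bigl(c_{m,\z_1}^{\z_2}(n)+\delta_{\z_1,\z_2}\delta_{m=n}\bigr)$. So in substance your plan is the right one and coincides with the source the paper relies on.

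There is, however, one concrete error in your unfolding step, and it sits precisely where the exact constant is decided. Since $\Psi_{2k,n}^{\z}$ is a sum over \emph{all} of $\SL_2(\Z)$, unfolding produces the factor $2=\#\{\pm I\}$ (the elements acting trivially on $\H$), not a factor $\omega_{\z}$: the elements of a nontrivial coset of the stabilizer of $\z$ do \emph{not} give identical terms after slashing. Indeed, for $M=\left(\begin{smallmatrix}a&b\\c&d\end{smallmatrix}\right)$ fixing $\z$ one has $|c\z+d|=1$ and $\psi_{2k,n}^{\z}|_{2k}M=(c\z+d)^{-2(k+n)}\,\psi_{2k,n}^{\z}$, a nontrivial root of unity in general (which is exactly why these Poincar\'e series can vanish identically when $\z$ is an elliptic point). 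With the correct factor $2$, your bookkeeping does close: using $z-\overline{\z}=\tfrac{2i\y}{1-X}$, $y=\y\tfrac{1-|X|^2}{|1-X|^2}$ and $dx\,dy=\tfrac{|z-\overline{\z}|^4}{4\y^2}$ times the Euclidean measure in $X$, all powers of $|z-\overline{\z}|$ cancel, the angular integration leaves only the term $m=n$ of the expansion of $f$, and $2\cdot 2^{-(4k-2)}\y^{-2k}\cdot 2\pi\cdot\tfrac{n!\,(2k-2)!}{2\,(n+2k-1)!}=\tfrac{8\pi(2k-2)!\,n!}{(4\y)^{2k}(2k-1+n)!}$, as claimed. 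If instead you carry an unfolding factor $\omega_{\z}$, the constant comes out wrong (off by $\omega_{\z}/2$, hence already by $1/2$ for generic $\z$), and since this precise normalization is what is matched against Lemma \ref{lem:xiDelliptic} in the proof of Theorem \ref{thm:xiDinner}, it is not a harmless slip. A minor further point: justify interchanging the elliptic expansion of $f$ with the integral over the disk (e.g.\ integrate over $|X|\le r<1$ and let $r\to 1$); this is routine because $f$ is a cusp form and the series converges on the whole disk.
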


\section{Proof of Theorem \ref{thm:xiDinner}}\label{sec:xiDinner}
We are now ready to prove Theorem \ref{thm:xiDinner}. 
\begin{proof}[Proof of Theorem \ref{thm:xiDinner}]
By Lemma \ref{lem:PPsiRel} (1) and Lemma \ref{lem:genS2k}, we may assume that $F=\mathbb{P}_{2-2k,n}^{\z_1},$ $G=\mathbb{P}_{2-2k,m}^{\z_2}$ with $n,m\notin[0,2k-2]$. By Lemma \ref{lem:PPsiRel} (1), (2), changing $n\mapsto n+2k-1$ and $m\mapsto m+2k-1$, we need to show that 
\begin{equation}\label{eqn:PsiInnertoshow}
\left\langle\Psi_{2k,m}^{\z_2},\Psi_{2k,n}^{\z_1}\right\rangle=-\left\langle\Psi_{2k,-n-2k}^{\z_1},\Psi_{2k,-m-2k}^{\z_2}\right\rangle.
\end{equation}
After the change of variables, the restrictions on $n$ and $m$ become $n,m\notin [1-2k,-1]$. Noting the symmetry in \eqref{eqn:PsiInnertoshow}, we may assume without loss of generality that $n\geq 0$, and hence $\Psi_{2k,n}^{\z_1}$ is a cusp form by Lemma \ref{lem:genS2k} (1).  Petersson showed in \cite[Satz 8]{Pe2} that $\Psi_{2k,m}^{\z}$ is orthogonal to cusp forms if $m\leq-1$. Hence if $m\leq -2k$, then both sides of \eqref{eqn:PsiInnertoshow} vanish, and we may assume that $n,m\geq 0$. 
Setting $\mathcal{P}:=(4\y_2)^{1-2k}\mathbb{P}_{2-2k,2k-1+m}^{\z_2}$, we see by Lemma \ref{lem:PPsiRel} (1) that \eqref{eqn:PsiInnertoshow} is
 equivalent to
\begin{equation}\label{inner}
\left\langle\Psi_{2k,m}^{\z_2},\Psi_{2k,n}^{\z_1}\right\rangle=-\left\langle\Psi_{2k,-n-2k}^{\z_1},\xi_{2-2k}(\mathcal{P})\right\rangle.
\end{equation}
Since $m\geq 0$, Lemma \ref{lem:genS2k} (3) implies that $\Psi_{2k,-m-2k}^{\z_2}\in \mathbb{D}_{2k}$ and hence we may apply  Lemma \ref{lem:innermero} to see that \eqref{inner} is equivalent to 
\begin{equation*}\label{eqn:InnerBeforePeterssonCoeffFormula}
\left\langle\Psi_{2k,m}^{\z_2},\Psi_{2k,n}^{\z_1}\right\rangle=- \frac{2\pi }{\y_1}c_{\mathcal{P},\z_1}^+(2k-1+n)=-\frac{2\pi (4\y_2)^{1-2k}}{\y_1}2\omega_{\z_2}c_{k+m,\z_1}^{\z_2,+}(2k-1+n),
\end{equation*}
where the last equality follows by recalling from \eqref{eqn:expandPgen} that $c_{j,\z_1}^{\z_2,+}(\ell)$ is the $\ell$-th coefficient in the elliptic expansion of $\mathbb{P}_{2-2k,j+k-1}^{\z_2}(z)$ around $\z_1$ and plugging back in the definition of $\mathcal{P}$. By Lemma \ref{lem:PeterssonCoeff} the right-hand side equals
\begin{equation*}%\label{eqn:InnerDxiEquivalent}
 \frac{8\pi(2k-2)!n!}{(4\y_1)^{2k}(2k-1+n)!} \left(c_{m,\z_1}^{\z_2}(n)+\delta_{\z_1,\z_2}\delta_{m=n}\right) = -\frac{2\pi \left(4\y_2\right)^{1-2k}}{\y_1}c_{k+m,\z_1}^{\z_2,+}(2k-1+n). 
\end{equation*}
By Lemma \ref{lem:xiDelliptic} (3) with $m\mapsto m+k$ and $n\mapsto 2k-1+n$, we have
\begin{equation}\label{eqn:almostthere}
b_{m+k,\z_1}^{\z_2}(2k-1+n) =-(2k-2)!\left(\frac{\y_2}{\y_1}\right)^{2k-1}2\omega_{\z_2}\left(c_{m,\z_1}^{\z_2}(n)+\delta_{\z_1,\z_2}\delta_{m=n}\right).
\end{equation}
Since $2k-1+n\geq 2k-1$ due to $n\geq 0$, by the definition \eqref{eqn:bdef} we have 
\[
b_{m+k,\z_1}^{\z_2}(2k-1+n)= 2\omega_{\z_2}\frac{(n+2k-1)!}{n!} c_{m+k,\z_1}^{\z_2,+}(2k-1+n).
\]
Plugging this into \eqref{eqn:almostthere} yields
\begin{multline*}
2\omega_{\z_2}\frac{(n+2k-1)!}{n!} c_{m+k,\z_1}^{\z_2,+}(2k-1+n)\\=
-(2k-2)!\left(\frac{\y_2}{\y_1}\right)^{2k-1} 2\omega_{\z_2}\left(c_{m,\z_1}^{\z_2}(n)+\delta_{\z_1,\z_2}\delta_{m=n}\right).
\end{multline*}
Combining yields the claim.
\end{proof}
\section{Proof of Corollaries \ref{cor:DperpIntro}, \ref{cor:DperpFDIntro}, and \ref{cor:HeckeExistIntro}, and Proposition \ref{prop:EperpIntro}}\label{sec:Dperp}
Theorem \ref{thm:xiDinner} yields interesting corollaries about orthogonality between different spaces. More precisely, for a space $Y$, we investigate, for $X$ a subspace of $Y$
\[
X^{\perp}=X_Y^{\perp}:=\{ f\in Y:\left[f,g\right]=0\text{ for all } g\in X\}\subseteq Y.
\]
We may omit the dependence on $Y$ whenever it is clear from the context.

Many natural questions boil down to determining $X_{Y}^{\perp}$. For example, taking $X=\C f$ to be the space spanned by a fixed $f$, we see that $f$ is isotropic if and only if $f\in (\C f)^{\perp}$. Moreover, note that $(\C f)_Y^{\perp}=Y$ if and only if $f\in Y_Y^{\perp}=Y^{\perp}.$ We call $Y^{\perp}$ the \begin{it}degenerate part\end{it} of $Y$. 
Our first corollary, a formal version of Corollary \ref{cor:DperpIntro}, yields an explicit evaluation of the degenerate part of $\mathbb{D}_{2k}$.
\begin{corollary}\label{cor:Dperp}
We have 
\[
\left(\mathbb{D}_{2k}\right)_{\mathbb{D}_{2k}}^{\perp}= D^{2k-1}\!\left(\mathbb{S}_{2-2k}\right).
\]
\end{corollary}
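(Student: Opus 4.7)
The plan is to prove both inclusions by exploiting Theorem \ref{thm:xiDinner}, which pairs the regularized inner product on $\xi_{2-2k}$-images with the regularized inner product on $D^{2k-1}$-images whenever both images lie in $\mathbb{D}_{2k}\perp S_{2k}$. Lemma \ref{lem:DXicusp} provides that $\xi_{2-2k}\colon \mathscr{H}_{2-2k}^{\operatorname{cusp}}\to S_{2k}$ and $D^{2k-1}\colon \mathscr{H}_{2-2k}^{\operatorname{cusp}}\to \mathbb{D}_{2k}$ are both surjective, so we can freely choose pre-images in $\mathscr{H}_{2-2k}^{\operatorname{cusp}}$, which is precisely the subspace ensuring the hypothesis of Theorem \ref{thm:xiDinner} is met.

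For the inclusion $D^{2k-1}(\mathbb{S}_{2-2k})\subseteq (\mathbb{D}_{2k})_{\mathbb{D}_{2k}}^{\perp}$, let $f\in \mathbb{S}_{2-2k}$, so that $\xi_{2-2k}(f)=0\in S_{2k}$ and hence $D^{2k-1}(f)\in \mathbb{D}_{2k}$ by Lemma \ref{lem:PPsiRel}~(3). For any $g\in \mathbb{D}_{2k}$, write $g=D^{2k-1}(G)$ with $G\in \mathscr{H}_{2-2k}^{\operatorname{cusp}}$. Since $\xi_{2-2k}(f)=0$ and $\xi_{2-2k}(G)\in S_{2k}$ both lie in $\mathbb{D}_{2k}\perp S_{2k}$, Theorem \ref{thm:xiDinner} gives
\[
0=\left<\xi_{2-2k}(f),\xi_{2-2k}(G)\right>=-\frac{(4\pi)^{4k-2}}{(2k-2)!^2}\left<g,D^{2k-1}(f)\right>,
\]
and conjugate symmetry yields $\left<D^{2k-1}(f),g\right>=0$, as required.

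For the reverse inclusion, take $h\in (\mathbb{D}_{2k})_{\mathbb{D}_{2k}}^{\perp}$ and use surjectivity to write $h=D^{2k-1}(F)$ with $F\in \mathscr{H}_{2-2k}^{\operatorname{cusp}}$, so $\xi_{2-2k}(F)\in S_{2k}$. Given an arbitrary $s\in S_{2k}$, use the surjectivity of $\xi_{2-2k}\colon \mathscr{H}_{2-2k}^{\operatorname{cusp}}\to S_{2k}$ to produce $G\in \mathscr{H}_{2-2k}^{\operatorname{cusp}}$ with $\xi_{2-2k}(G)=s$, and set $g:=D^{2k-1}(G)\in \mathbb{D}_{2k}$. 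Theorem \ref{thm:xiDinner} then gives
\[
\left<\xi_{2-2k}(F),s\right>=-\frac{(4\pi)^{4k-2}}{(2k-2)!^2}\left<g,h\right>=0,
\]
since $h\in (\mathbb{D}_{2k})_{\mathbb{D}_{2k}}^{\perp}$. Hence $\xi_{2-2k}(F)\in S_{2k}$ is orthogonal to every element of $S_{2k}$; by positive-definiteness of the classical Petersson inner product on $S_{2k}$, this forces $\xi_{2-2k}(F)=0$. Thus $F$ is a polar harmonic cusp form annihilated by $\xi_{2-2k}$, i.e., a meromorphic cusp form, so $F\in \mathbb{S}_{2-2k}$ and $h=D^{2k-1}(F)\in D^{2k-1}(\mathbb{S}_{2-2k})$.

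The argument is essentially a direct translation of Theorem \ref{thm:xiDinner} into an adjointness statement, combined with the positive-definiteness of Petersson on $S_{2k}$. The main subtlety is purely bookkeeping: one must select pre-images under $D^{2k-1}$ and $\xi_{2-2k}$ inside the specific subspace $\mathscr{H}_{2-2k}^{\operatorname{cusp}}$ so that the hypothesis $\xi_{2-2k}(F),\xi_{2-2k}(G)\in \mathbb{D}_{2k}\perp S_{2k}$ of Theorem \ref{thm:xiDinner} is satisfied, which is exactly what Lemma \ref{lem:DXicusp} makes possible.
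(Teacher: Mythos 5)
Your proof is correct and follows essentially the same route as the paper: it uses Theorem \ref{thm:xiDinner} as an adjointness statement between $\xi_{2-2k}$- and $D^{2k-1}$-images of $\mathscr{H}_{2-2k}^{\operatorname{cusp}}$, the surjectivity statements of Lemma \ref{lem:DXicusp}, positive-definiteness on $S_{2k}$, and the identification of $\ker(\xi_{2-2k})$ with $\mathbb{S}_{2-2k}$. The only difference is organizational (two separate inclusions instead of the paper's chain of equivalences), with your first inclusion making explicit the containment $D^{2k-1}(\mathbb{S}_{2-2k})\subseteq\mathbb{D}_{2k}$ that the paper leaves implicit.
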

\begin{proof}
 By Lemma \ref{lem:DXicusp} (2), for $g\in \mathbb{D}_{2k}$, we may choose $G\in \mathscr{H}_{2-2k}^{\operatorname{cusp}}$ such that $D^{2k-1}(G)=g$. Thus $D^{2k-1}(F)\in (\mathbb{D}_{2k})_{\mathbb{D}_{2k}}^{\perp}$ if and only if for every $G\in \mathscr{H}_{2-2k}^{\operatorname{cusp}}$ we have $\langle D^{2k-1}(F),D^{2k-1}(G)\rangle=0.$ By Theorem \ref{thm:xiDinner}, this is equivalent to\\ 
$\langle\xi_{2-2k}(F),\xi_{2-2k}(G)\rangle=0$ for all $G\in\mathscr{H}_{2-2k}^{\operatorname{cusp}}$. From Lemma \ref{lem:DXicusp} (1), this is equivalent to $f:=\xi_{2-2k}(F)\in S_{2k}$ being orthogonal to all of $S_{2k}$. Since the inner product is positive-definite on $S_{2k}$, $f$ is orthogonal to all of $S_{2k}$ if and only if $f=0$, which holds if and only if $F\in \mathbb{S}_{2-2k}$ (because the kernel of $\xi_{2-2k}$ is the subspace of meromorphic modular forms). Hence $D^{2k-1}(F)\in (\mathbb{D}_{2k})_{\mathbb{D}_{2k}}^{\perp}$ if and only if $F\in  \mathbb{S}_{2k}$, which is the statement of Corollary \ref{cor:Dperp}.
\end{proof}

The next corollary is a formal version of Corollary \ref{cor:DperpFDIntro}.
\begin{corollary}\label{cor:DperpFD}
We have $\dim(\mathbb{D}_{2k}/(\mathbb{D}_{2k})_{\mathbb{D}_{2k}}^{\perp})=\dim(S_k)$  and $\langle\cdot,\cdot\rangle$ is 
positive-definite on  $\mathbb{D}_{2k}/(\mathbb{D}_{2k})_{\mathbb{D}_{2k}}^{\perp}$.
\end{corollary}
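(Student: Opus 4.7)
The plan is to transport the (positive-definite) Petersson inner product on $S_{2k}$ to the quotient $\mathbb{D}_{2k}/(\mathbb{D}_{2k})_{\mathbb{D}_{2k}}^{\perp}$ via an explicit isomorphism $\Phi$ built from the operators $\xi_{2-2k}$ and $D^{2k-1}$, thereby settling both the dimension count and the definiteness claim in one stroke. The key inputs will be Corollary \ref{cor:Dperp} (already proved) together with the exchange identity in Theorem \ref{thm:xiDinner}.

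First I would define $\Phi\colon S_{2k}\to \mathbb{D}_{2k}/(\mathbb{D}_{2k})_{\mathbb{D}_{2k}}^{\perp}$ by $\Phi(f):=[D^{2k-1}(F)]$, where $F\in \mathscr{H}_{2-2k}^{\operatorname{cusp}}$ is any preimage of $f$ under $\xi_{2-2k}$; such an $F$ exists by Lemma \ref{lem:DXicusp}~(1). For well-definedness, any two such lifts differ by an element of $\ker(\xi_{2-2k})\cap \mathscr{H}_{2-2k}^{\operatorname{cusp}}=\mathbb{S}_{2-2k}$, and by Corollary \ref{cor:Dperp} the image of $\mathbb{S}_{2-2k}$ under $D^{2k-1}$ is exactly $(\mathbb{D}_{2k})_{\mathbb{D}_{2k}}^{\perp}$, so both choices of image yield the same class. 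Surjectivity of $\Phi$ is immediate from Lemma \ref{lem:DXicusp}~(2): any class $[g]$ admits some $G\in\mathscr{H}_{2-2k}^{\operatorname{cusp}}$ with $D^{2k-1}(G)=g$, and then $\Phi(\xi_{2-2k}(G))=[g]$.

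Injectivity of $\Phi$ is where Theorem \ref{thm:xiDinner} enters. Suppose $\Phi(f)=0$; then $D^{2k-1}(F)\in (\mathbb{D}_{2k})_{\mathbb{D}_{2k}}^{\perp}$, so $\langle D^{2k-1}(F),D^{2k-1}(G)\rangle=0$ for every $G\in\mathscr{H}_{2-2k}^{\operatorname{cusp}}$ (using Lemma \ref{lem:DXicusp}~(2) to parametrize all of $\mathbb{D}_{2k}$). Theorem \ref{thm:xiDinner} then forces $\langle\xi_{2-2k}(G),f\rangle=0$ for all such $G$, and surjectivity of $\xi_{2-2k}$ onto $S_{2k}$ combined with positive-definiteness of the classical Petersson inner product gives $f=0$. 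This proves $\dim(\mathbb{D}_{2k}/(\mathbb{D}_{2k})_{\mathbb{D}_{2k}}^{\perp})=\dim(S_{2k})$.

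Finally, for the definiteness assertion I would apply Theorem \ref{thm:xiDinner} with $F=G$ to obtain
\[
\langle D^{2k-1}(F),D^{2k-1}(F)\rangle=-\frac{(2k-2)!^{2}}{(4\pi)^{4k-2}}\,\|\xi_{2-2k}(F)\|^{2}=-\frac{(2k-2)!^{2}}{(4\pi)^{4k-2}}\,\|f\|^{2}.
\]
Thus, transported via $\Phi$, the regularized quadratic form on $\mathbb{D}_{2k}/(\mathbb{D}_{2k})_{\mathbb{D}_{2k}}^{\perp}$ agrees with the classical Petersson form on $S_{2k}$ up to a single nonzero real scalar, and is therefore definite on the quotient. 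I do not anticipate a genuine obstacle here; the only bookkeeping worth flagging is the overall sign and the $F\leftrightarrow G$ swap in Theorem \ref{thm:xiDinner}, which have to be absorbed into the identification of the two inner products when the statement is phrased as ``positive-definite.''
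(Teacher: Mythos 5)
Your argument is correct and is essentially the paper's own proof: both reduce the statement to $S_{2k}$ by lifting via Lemma \ref{lem:DXicusp}~(2), identifying the degenerate part through Corollary \ref{cor:Dperp}, and transporting the inner product with Theorem \ref{thm:xiDinner}, the only cosmetic difference being that the paper carries an orthogonal basis of $S_{2k}$ across rather than packaging the correspondence as an explicit isomorphism $\Phi$. The sign you flag at the end is a genuine wrinkle, but it is inherited from the statement of Theorem \ref{thm:xiDinner} itself: the paper's proof likewise obtains $\langle f,f\rangle=-\frac{(2k-2)!^2}{(4\pi)^{4k-2}}\,\langle \xi_{2-2k}(F),\xi_{2-2k}(F)\rangle$ and then silently equates this with $\langle \xi_{2-2k}(F),\xi_{2-2k}(F)\rangle$, so the gap between ``definite'' and ``positive-definite'' reflects a normalization/sign issue in the paper rather than a defect in your reasoning.
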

\begin{proof}
By Lemma \ref{lem:DXicusp} (2), for $f\in \mathbb{D}_{2k}$ there exists $F\in \mathscr{H}_{2-2k}^{\operatorname{cusp}}$ such that $D^{2k-1}(F)=f$. Setting $\mathbbm{f}:=\xi_{2-2k}(F)\in S_{2k}$, Theorem \ref{thm:xiDinner} implies that 
\[
\left<f,f\right>=\left<f,D^{2k-1}(F)\right>=-\frac{(2k-2)!^2}{(4\pi)^{4k-2}} \left<\xi_{2-2k}(F),\mathbbm{f}\right>=\left<\mathbbm{f},\mathbbm{f}\right>.
\]
Since the inner product is positive-definite on $S_{2k}$, $\langle\mathbbm{f},\mathbbm{f}\rangle\geq 0$, with equality if and only if $\mathbbm{f}=0$. Since $\mathbbm{f}=0$ is equivalent to $f\in (\mathbb{D}_{2k})_{\mathbb{D}_{2k}}^{\perp}$ by Corollary \ref{cor:Dperp}, the inner product is positive-definite on $\mathbb{D}_{2k}/(\mathbb{D}_{2k})_{\mathbb{D}_{2k}}^{\perp}$.  If $\mathbbm{f}_1,\dots,\mathbbm{f}_d$ form an orthogonal basis of $S_{2k}$ and $F_1,\dots,F_d\in\mathscr{H}_{2-2k}^{\operatorname{cusp}}$ satisfy $\xi_{2-2k}(F_d)=\mathbbm{f}_d$, then Theorem \ref{thm:xiDinner} and Lemma \ref{lem:DXicusp} (2) imply that $f_1:=D^{2k-1}(F_1),\dots,f_d:=D^{2k-1}(F_d)$ form an orthogonal basis of $\mathbb{D}_{2k}/(\mathbb{D}_{2k})_{\mathbb{D}_{2k}}^{\perp}$. The dimensions hence coincide. 
\end{proof}

We next use Theorem \ref{thm:xiDinner} to evaluate $(\mathbb{S}_{2k}^{\z})_{Y}^{\perp}$ for certain $Y$. In order to prove Corollary \ref{cor:HeckeExistIntro}, we first require the following corollary.
\begin{corollary}\label{cor:Hecke}
\noindent

\noindent
\begin{enumerate}[leftmargin=*,label={\rm(\arabic*)}]
\item For any $\varrho\in \H$ we have
\begin{multline*}
\left(\mathbb{S}_{2k}^{\z}\right)_{\mathbb{D}_{2k}^{\varrho}}^{\perp} = \left\{ f\in D^{2k-1}\left(\mathbb{S}_{2-2k}^{\varrho}\right): f=D^{2k-1}(F)\text{ with }F\in\mathbb{S}_{2-2k}^{\varrho}, \ p_{\mathfrak{F}_{2-2k}(F),\z}(z)=0\right\}.
\end{multline*}
\item We have
\begin{multline*}
\left(\mathbb{S}_{2k}^{\z}\right)_{\mathbb{D}_{2k}}^{\perp} = \left\{ f\in D^{2k-1}\left(\mathbb{S}_{2-2k}^{\z}\right): f=D^{2k-1}(F)\text{ with }F\in\mathbb{S}_{2-2k}^{\varrho}, \ p_{\mathfrak{F}_{2-2k}(F),\z}(z)=0\right\}.
\end{multline*}
\end{enumerate}
\end{corollary}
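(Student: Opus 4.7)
The plan is to test the orthogonality condition against the three summands in the splitting $\mathbb{S}_{2k}^{\z} = S_{2k} \perp (\mathbb{E}_{2k}^{\z} \oplus \mathbb{D}_{2k}^{\z})$ from \eqref{eqn:S2ksplitting}. For $f \in \mathbb{D}_{2k}^{\varrho}$, orthogonality to $S_{2k}$ is automatic by the definition of $\mathbb{D}_{2k}^{\varrho}$, so $f$ lies in $(\mathbb{S}_{2k}^{\z})_{\mathbb{D}_{2k}^{\varrho}}^{\perp}$ if and only if $f \perp \mathbb{D}_{2k}^{\z}$ and $f \perp \mathbb{E}_{2k}^{\z}$. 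By Lemma \ref{lem:DXicusp} (2) I fix a preimage $F \in \mathscr{H}_{2-2k}^{\operatorname{cusp}}$ with $D^{2k-1}(F) = f$ and run the rest of the argument through this $F$.

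For the first condition, Lemma \ref{lem:DXicusp} (2) again lets me write any $g \in \mathbb{D}_{2k}^{\z}$ as $D^{2k-1}(G)$ with $G \in \mathscr{H}_{2-2k}^{\z,\operatorname{cusp}}$, and Theorem \ref{thm:xiDinner} then rewrites $\langle f, g \rangle$ as a nonzero constant times the classical Petersson pairing $\langle \xi_{2-2k}(G), \xi_{2-2k}(F) \rangle$ of two cusp forms. Since $\xi_{2-2k}(\mathscr{H}_{2-2k}^{\z,\operatorname{cusp}})$ already spans all of $S_{2k}$ by Lemma \ref{lem:PPsiRel} (1) together with Lemma \ref{lem:genS2k} (1), positive-definiteness of the Petersson inner product on $S_{2k}$ forces $\xi_{2-2k}(F) = 0$, i.e., $F$ is meromorphic, so $F \in \mathbb{S}_{2-2k}$. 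Since $D^{2k-1}$ strictly raises pole orders, the constraint that $f$ has singularities only at $\varrho$ promotes this to $F \in \mathbb{S}_{2-2k}^{\varrho}$.

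For the second condition, by Lemma \ref{lem:genS2k} (2) it suffices to test $\langle \Psi_{2k,\ell}^{\z}, f \rangle$ for $\ell \in \{1-2k,\ldots,-1\}$. Because $F$ is now meromorphic, $\xi_{2-2k}(F) = 0$ and one cannot feed $F$ itself into Lemma \ref{lem:innermero}; instead I use the alternate preimage $\widetilde{F} := \frac{(2k-2)!}{(4\pi)^{2k-1}} \mathfrak{F}_{2-2k}(F)$, which by Lemma \ref{lem:flip} (2) satisfies $\xi_{2-2k}(\widetilde{F}) = f$ (note that $\Delta_{2-2k}(F) = 0$ since $F$ is meromorphic, so the flipping operator applies). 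Lemma \ref{lem:innermero} then expresses $\langle \Psi_{2k,\ell}^{\z}, f \rangle$ as a nonzero multiple of $c_{\mathfrak{F}_{2-2k}(F),\z}^+(-\ell-1)$, and as $\ell$ ranges over $\{1-2k,\ldots,-1\}$ the shifted index $-\ell-1$ runs over exactly $\{0, 1, \ldots, 2k-2\}$, which are precisely the coefficient indices appearing in the polynomial part $p_{\mathfrak{F}_{2-2k}(F),\z}$. Hence $f \perp \mathbb{E}_{2k}^{\z}$ becomes the vanishing $p_{\mathfrak{F}_{2-2k}(F),\z}(z) = 0$, and together with the previous paragraph this proves part (1). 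Part (2) then follows by letting the pole locus $\varrho$ vary, since $\mathbb{D}_{2k}$ is spanned by its subspaces $\mathbb{D}_{2k}^{\varrho}$ and the orthogonality condition against $\mathbb{S}_{2k}^{\z}$ is linear.

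The main obstacle I anticipate is the pole-tracking bookkeeping in the second paragraph: one must argue that the chosen preimage $F$ can be selected to lie in $\mathbb{S}_{2-2k}^{\varrho}$ rather than merely in $\mathbb{S}_{2-2k}$. This hinges on the observation that $D^{2k-1}$ neither creates nor annihilates poles, which is immediate but essential, since otherwise the parametrization of $(\mathbb{S}_{2k}^{\z})_{\mathbb{D}_{2k}^{\varrho}}^{\perp}$ would not factor cleanly through a single pole $\varrho$. A secondary point worth verifying carefully is the numerology $\ell \mapsto -\ell-1$ matching the basis of $\mathbb{E}_{2k}^{\z}$ bijectively with the polynomial-part indices $\{0, \ldots, 2k-2\}$, which is what allows the clean final characterization via $p_{\mathfrak{F}_{2-2k}(F),\z}$.
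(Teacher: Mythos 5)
Your part (1) is correct and, at bottom, is the paper's argument in lightly reorganized form. Where the paper first promotes orthogonality from $\mathbb{D}_{2k}^{\z}$ to all of $\mathbb{D}_{2k}$ (by exhibiting orthogonal representatives of $\mathbb{D}_{2k}/(\mathbb{D}_{2k})_{\mathbb{D}_{2k}}^{\perp}$ inside $\mathbb{D}_{2k}^{\z}$) and then invokes Corollary \ref{cor:Dperp}, you test $f$ directly against $D^{2k-1}(G)$ for $G\in\mathscr{H}_{2-2k}^{\z,\operatorname{cusp}}$ and use that $\xi_{2-2k}$ of this space already exhausts $S_{2k}$; both routes run through Theorem \ref{thm:xiDinner} and positive-definiteness on $S_{2k}$. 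Your pole-tracking remark (once $\xi_{2-2k}(F)=0$, differentiation cannot remove poles, so $F\in\mathbb{S}_{2-2k}^{\varrho}$) and the flip/Lemma \ref{lem:innermero} step for the $\mathbb{E}_{2k}^{\z}$-condition, with the index match $\ell\mapsto-\ell-1$, coincide with the paper. One small citation slip: Lemma \ref{lem:DXicusp} (2) gives surjectivity onto $\mathbb{D}_{2k}$, not the $\z$-refined statement that every $g\in\mathbb{D}_{2k}^{\z}$ is $D^{2k-1}(G)$ with $G\in\mathscr{H}_{2-2k}^{\z,\operatorname{cusp}}$; that refinement is true but should be quoted from Lemma \ref{lem:PPsiRel} (2) together with Lemma \ref{lem:genS2k} (3).

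The genuine gap is your one-line deduction of part (2). The space $(\mathbb{S}_{2k}^{\z})_{\mathbb{D}_{2k}}^{\perp}$ consists of elements of $\mathbb{D}_{2k}$, i.e., finite sums of forms with poles at several points, and knowing the solution set of the linear condition ``orthogonal to $\mathbb{S}_{2k}^{\z}$'' inside each $\mathbb{D}_{2k}^{\varrho}$ does not determine its solution set inside the span: if $f=f_1+f_2$ with $f_j\in\mathbb{D}_{2k}^{\varrho_j}$ is orthogonal to $\mathbb{S}_{2k}^{\z}$, the summands $f_j$ need not be, so $(\mathbb{S}_{2k}^{\z})_{\mathbb{D}_{2k}}^{\perp}$ is a priori larger than the span over $\varrho$ of the sets from part (1); ``letting $\varrho$ vary plus linearity'' is therefore a non sequitur. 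The repair is simply to rerun your part (1) argument with ambient space $\mathbb{D}_{2k}$, which is what the paper does (``analogously to (1)''): by Lemma \ref{lem:DXicusp} (2) pick $F\in\mathscr{H}_{2-2k}^{\operatorname{cusp}}$ with $D^{2k-1}(F)=f$; orthogonality to $\mathbb{D}_{2k}^{\z}$ forces $\xi_{2-2k}(F)=0$ exactly as before, hence $F\in\mathbb{S}_{2-2k}$ (now with poles wherever $f$ has them, so no single-$\varrho$ constraint), and the $\mathbb{E}_{2k}^{\z}$-condition again becomes $p_{\mathfrak{F}_{2-2k}(F),\z}=0$. Note also that the printed statement of (2), with its stray $\varrho$ and the superscript $\z$ on $\mathbb{S}_{2-2k}$, should be read in this corrected form; your argument, once repaired as above, proves that version.
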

\begin{proof}
(1) Note first that if $f\in (\mathbb{S}_{2k}^{\z})_{\mathbb{D}_{2k}^{\varrho}}^{\perp}$, then in particular $f\in (\mathbb{D}_{2k}^{\z})_{\mathbb{D}_{2k}^{\varrho}}^{\perp}$. We claim that $f\in (\mathbb{D}_{2k})_{\mathbb{D}_{2k}^{\varrho}}^{\perp}$, which by Corollary \ref{cor:Dperp} implies that $f\in D^{2k-1}(\mathbb{S}_{2-2k}^{\varrho})$. 

In order to show that $f\in (\mathbb{D}_{2k})_{\mathbb{D}_{2k}^{\varrho}}^{\perp}$, we claim that there is a  set of representatives  $f_1,\dots,f_d\in \mathbb{D}_{2k}^{\z}$ which form an orthogonal basis of $\mathbb{D}_{2k}/(\mathbb{D}_{2k})_{\mathbb{D}_{2k}}^{\perp}$, where $d=\dim_{\C}(S_{2k})$. If this is the case, then for $f\in (\mathbb{D}_{2k}^{\z})_{\mathbb{D}_{2k}^{\varrho}}^{\perp}$ we have $\langle f,f_j\rangle=0$ for all $1\leq j\leq d$. Since 
$
\langle f,f_j+g\rangle=\langle f,f_j\rangle
$
 for every $g\in \mathbb{D}_{2k}^{\perp}$, we see that this implies that $f$ is orthogonal to all of $\mathbb{D}_{2k}$ if there is indeed such an orthogonal basis $f_1,\dots,f_d$. To prove the existence of the basis elements $f_1,\dots f_d$, note that by Lemma \ref{lem:genS2k}, $S_{2k}$ is spanned by $\Psi_{2k,n}^{\z}$ with $n\in N_0$. Thus there exists an orthogonal basis $g_1,\dots,g_d$ of $S_{2k}$ and some $c_{j,n}\in \C$ (with only finitely many non-zero) $g_{j}=\sum_{n\geq0} c_{j,n} \Psi_{2k,n}^{\z}$. By Lemma \ref{lem:PPsiRel} (1), we have 
\[
g_j= \xi_{2-2k}\left((4\y)^{1-2k} \sum_{n\geq0} \overline{c_{j,n}} \mathbb{P}_{2-2k,-n-1}^{\z}\right). 
\]
We then set 
\[
f_j:=D^{2k-1}\left((4\y)^{1-2k} \sum_{n\geq0} \overline{c_{j,n}} \mathbb{P}_{2-2k,-n-1}^{\z}\right). 
\]
By Theorem \ref{thm:xiDinner}, 
\[
\left<f_j,f_{\ell}\right>=-\frac{(2k-2)!^2}{(4\pi)^{4k-2}} \left<g_{\ell},g_j\right>,
\]
and we see that $f_1,\dots,f_d$ is hence an orthogonal basis. 
We therefore conclude that $f\in  (\mathbb{D}_{2k}^{\z})_{\mathbb{D}_{2k}^{\varrho}}^{\perp}$ if and only if $f\in (\mathbb{D}_{2k})_{\mathbb{D}_{2k}^{\varrho}}^{\perp}$, which by Corollary \ref{cor:Dperp} is equivalent to $f\in D^{2k-1}(\mathbb{S}_{2-2k})$.  
We thus assume that $f\in D^{2k-1}(\mathbb{S}_{2-2k})$ and since $D^{2k-1}(\mathbb{S}_{2-2k})$ is orthogonal to cusp forms by \cite[Satz 8]{Pe2}, we see that $f=D^{2k-1}(F) \in (\mathbb{S}_{2k}^{\z})_{\mathbb{D}_{2k}^{\varrho}}^{\perp}$ if and only if $f\in (\mathbb{E}_{2k}^{\z})_{\mathbb{D}_{2k}^{\varrho}}^{\perp}$. It remains to show that $p_{\mathfrak{F}_{2-2k}(F),\z}=0$ if $f=D^{2k-1}(F) \in (\mathbb{E}_{2k}^{\z})_{\mathbb{D}_{2k}^{\varrho}}^{\perp}$. Since $f=D^{2k-1}(F)$, Lemma \ref{lem:flip} (2) implies that $f= \frac{(2k-2)!}{(4\pi)^{2k-1}} \xi_{2-2k}\left(\mathfrak{F}_{2-2k}(F)\right).$
Lemma \ref{lem:innermero} then gives that for $1-2k\leq n\leq -1$
\begin{equation}\label{eqn:innerE}
\left<\Psi_{2k,n}^{\z}, f\right>=0
\end{equation}
if and only if 
\begin{equation}\label{eqn:innerEcoeff}
c_{\mathfrak{F}_{2-2k}(F),\z}^{+}(-n-1)=0.
\end{equation}
This holds for every $1-2k\leq n\leq -1$ if and only if $p_{\mathfrak{F}_{2-2k}(F),\z}(z)=0$.

\noindent
(2) Analogously to (1), since $f\in (\mathbb{S}_{2k}^{\z})_{\mathbb{D}_{2k}}^{\perp}$, we have $f\in (\mathbb{D}_{2k}^{\z})_{\mathbb{D}_{2k}}^{\perp}$. Since there exists an orthogonal basis $f_1,\dots, f_d\in \mathbb{D}_{2k}^{\z}$ of $\mathbb{D}_{2k}/(\mathbb{D}_{2k})_{\mathbb{D}_{2k}}^{\perp}$, we again conclude that $f\in D^{2k-1}(\mathbb{S}_{2-2k})$ by Corollary \ref{cor:Dperp}.
 The equivalence between \eqref{eqn:innerE} and \eqref{eqn:innerEcoeff} then yields the claim.
\end{proof}

We are now ready to prove the following formal version of Corollary \ref{cor:HeckeExistIntro}.
\begin{corollary}\label{cor:HeckeExist}
The quotient spaces $\mathbb{S}_{2k}^{\z}/\mathbb{S}_{2k}^{\z,\perp}$ and $\mathbb{D}_{2k}/(\mathbb{S}_{2k}^{\z})_{\mathbb{D}_{2k}}^{\perp}$ are finite-dimensional. 
\end{corollary}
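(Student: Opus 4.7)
\emph{Strategy.} The plan is to exploit the orthogonal splitting $\mathbb{S}_{2k}^{\z}=S_{2k}\perp(\mathbb{E}_{2k}^{\z}\oplus\mathbb{D}_{2k}^{\z})$ from \eqref{eqn:S2ksplitting}, together with the finite dimensions $\dim\mathbb{E}_{2k}^{\z}=2k-1$ and $\dim(\mathbb{D}_{2k}/(\mathbb{D}_{2k})^{\perp}_{\mathbb{D}_{2k}})=\dim S_{2k}$ (from Lemma \ref{lem:genS2k} and Corollary \ref{cor:DperpFD}), and the orthogonal basis $f_1,\dots,f_d\in\mathbb{D}_{2k}^{\z}$ of the latter constructed in the proof of Corollary \ref{cor:Hecke}, to reduce both quotients to bounded-dimension summands. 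The unifying principle is that every pairing that enters actually factors through a finite-dimensional quotient.

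\emph{First quotient.} For $\mathbb{S}_{2k}^{\z}/\mathbb{S}_{2k}^{\z,\perp}$, the positive-definiteness of $\langle\cdot,\cdot\rangle$ on $S_{2k}$, combined with the orthogonality of $S_{2k}$ to $\mathbb{E}_{2k}^{\z}\oplus\mathbb{D}_{2k}^{\z}$, will force $\mathbb{S}_{2k}^{\z,\perp}\subseteq\mathbb{E}_{2k}^{\z}\oplus\mathbb{D}_{2k}^{\z}$, so that
\[
\mathbb{S}_{2k}^{\z}/\mathbb{S}_{2k}^{\z,\perp}\cong S_{2k}\oplus\bigl[(\mathbb{E}_{2k}^{\z}\oplus\mathbb{D}_{2k}^{\z})/\mathbb{S}_{2k}^{\z,\perp}\bigr].
\]
Since $\mathbb{E}_{2k}^{\z}$ is finite-dimensional, the problem reduces to bounding $\mathbb{D}_{2k}^{\z}/(\mathbb{D}_{2k}^{\z}\cap\mathbb{S}_{2k}^{\z,\perp})$. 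For $f\in\mathbb{D}_{2k}^{\z}$, orthogonality to $S_{2k}$ is automatic, so $\mathbb{D}_{2k}^{\z}\cap\mathbb{S}_{2k}^{\z,\perp}=V_1\cap V_2$ with $V_1:=(\mathbb{D}_{2k}^{\z})^{\perp}_{\mathbb{D}_{2k}^{\z}}$ and $V_2:=(\mathbb{E}_{2k}^{\z})^{\perp}_{\mathbb{D}_{2k}^{\z}}$. I will then invoke the canonical injection
\[
\mathbb{D}_{2k}^{\z}/(V_1\cap V_2)\hookrightarrow \mathbb{D}_{2k}^{\z}/V_1\oplus\mathbb{D}_{2k}^{\z}/V_2
\]
and bound each factor. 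The pairing with $\mathbb{E}_{2k}^{\z}$ produces $\mathbb{D}_{2k}^{\z}/V_2\hookrightarrow(\mathbb{E}_{2k}^{\z})^{*}$, so the second factor has dimension at most $2k-1$. For the first, I invoke the identification $V_1=\mathbb{D}_{2k}^{\z}\cap(\mathbb{D}_{2k})^{\perp}_{\mathbb{D}_{2k}}$ established in the proof of Corollary \ref{cor:Hecke} (1) (taken with $\varrho=\z$); combined with the explicit basis $f_1,\dots,f_d\in\mathbb{D}_{2k}^{\z}$ of $\mathbb{D}_{2k}/(\mathbb{D}_{2k})^{\perp}_{\mathbb{D}_{2k}}$, this yields $\dim\mathbb{D}_{2k}^{\z}/V_1=d=\dim S_{2k}$.

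\emph{Second quotient and main obstacle.} For $\mathbb{D}_{2k}/(\mathbb{S}_{2k}^{\z})^{\perp}_{\mathbb{D}_{2k}}$ I will apply the same template to $W:=\mathbb{D}_{2k}$ with $V_1':=(\mathbb{D}_{2k}^{\z})^{\perp}_{\mathbb{D}_{2k}}$ and $V_2':=(\mathbb{E}_{2k}^{\z})^{\perp}_{\mathbb{D}_{2k}}$; the orthogonality $\mathbb{D}_{2k}\perp S_{2k}$ makes orthogonality to $S_{2k}$ automatic, giving $V_1'\cap V_2'=(\mathbb{S}_{2k}^{\z})^{\perp}_{\mathbb{D}_{2k}}$, and the injection $W/(V_1'\cap V_2')\hookrightarrow W/V_1'\oplus W/V_2'$ once again reduces matters to bounding each summand. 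Pairing with $\mathbb{E}_{2k}^{\z}$ gives $W/V_2'\hookrightarrow(\mathbb{E}_{2k}^{\z})^{*}$, while for $W/V_1'$ I will argue that the pairing $\mathbb{D}_{2k}\times\mathbb{D}_{2k}^{\z}\to\mathbb{C}$ descends to an embedding $W/V_1'\hookrightarrow(\mathbb{D}_{2k}^{\z}/V_1)^{*}$ of dimension at most $d$. The main obstacle I anticipate is precisely justifying this last factorization: one must show that every $g\in V_1$ is actually orthogonal to the whole of $\mathbb{D}_{2k}$, not merely to $\mathbb{D}_{2k}^{\z}$. Fortunately, this is exactly the identification $V_1=\mathbb{D}_{2k}^{\z}\cap(\mathbb{D}_{2k})^{\perp}_{\mathbb{D}_{2k}}$ extracted from Corollary \ref{cor:Hecke}, which is the technical heart of the argument and is already in hand.
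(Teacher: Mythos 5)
Your proposal is correct, and it reaches both finiteness statements by a route that genuinely differs from the paper's in the key step. The paper invokes the full statement of Corollary \ref{cor:Hecke}: it identifies $(\mathbb{S}_{2k}^{\z})_{\mathbb{D}_{2k}^{\z}}^{\perp}$ and $(\mathbb{S}_{2k}^{\z})_{\mathbb{D}_{2k}}^{\perp}$ with explicit spaces cut out by the vanishing of the polynomial part $p_{\mathfrak{F}_{2-2k}(F),\z}$, takes the intermediary quotient by $(\mathbb{D}_{2k})_{\mathbb{D}_{2k}}^{\perp}=D^{2k-1}(\mathbb{S}_{2-2k})$ (resp.\ its $\z$-analogue), and bounds the remaining piece by $2k-1$ because a polynomial part admits at most $2k-1$ linearly independent choices. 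You bypass the statement of Corollary \ref{cor:Hecke} entirely, borrowing from its proof only the orthogonal representatives $f_1,\dots,f_d\in\mathbb{D}_{2k}^{\z}$ of $\mathbb{D}_{2k}/(\mathbb{D}_{2k})_{\mathbb{D}_{2k}}^{\perp}$ (hence, ultimately, Theorem \ref{thm:xiDinner} via Corollary \ref{cor:DperpFD}), and you replace the polynomial-part count by linear algebra: the injection $W/(V_1\cap V_2)\hookrightarrow W/V_1\oplus W/V_2$ together with pairing maps into $(\mathbb{E}_{2k}^{\z})^{*}$ and $(\mathbb{D}_{2k}^{\z}/V_1)^{*}$. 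This is sound: the bound $\dim(\mathbb{E}_{2k}^{\z})^{*}=2k-1$ plays exactly the role of the paper's $2k-1$ polynomial-part bound (the two are linked by Lemma \ref{lem:innermero}), and the descent of the pairing to $\mathbb{D}_{2k}^{\z}/V_1$ is indeed justified by the $\varrho=\z$ case of the argument inside the proof of Corollary \ref{cor:Hecke} (1), as you note. A small simplification: the ``main obstacle'' you flag is avoidable, since for $f\in\mathbb{D}_{2k}$ the functional $g\mapsto\langle f,g\rangle$ automatically annihilates $\mathbb{D}_{2k}^{\z}\cap(\mathbb{D}_{2k})_{\mathbb{D}_{2k}}^{\perp}$ (Hermitian symmetry), and $\mathbb{D}_{2k}^{\z}/\bigl(\mathbb{D}_{2k}^{\z}\cap(\mathbb{D}_{2k})_{\mathbb{D}_{2k}}^{\perp}\bigr)$ injects into $\mathbb{D}_{2k}/(\mathbb{D}_{2k})_{\mathbb{D}_{2k}}^{\perp}$, so the same $d$-dimensional bound follows without the non-trivial identification $V_1=\mathbb{D}_{2k}^{\z}\cap(\mathbb{D}_{2k})_{\mathbb{D}_{2k}}^{\perp}$. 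What the paper's route buys is the explicit description of the orthogonal complements (via flipped forms with vanishing polynomial part), which your argument does not provide; what yours buys is that the flipping operator and polynomial parts never enter, so the finiteness comes out of softer duality considerations.
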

\begin{proof}
We note that we have the orthogonal splittings 
\begin{align*}
\mathbb{S}_{2k}^{\z} &=S_{2k}\perp \left(\mathbb{E}_{2k}^{\z} \oplus \mathbb{D}_{2k}^{\z}\Big/\left(\mathbb{S}_{2k}^{\z}\right)_{\mathbb{D}_{2k}^{\z}}^{\perp}\right)\perp \left(\mathbb{S}_{2k}^{\z}\right)_{\mathbb{D}_{2k}^{\z}}^{\perp},
\\
\mathbb{S}_{2k}^{\z} &= S_{2k}\perp \left(\mathbb{E}_{2k}^{\z} \oplus \mathbb{D}_{2k}^{\z}\right)\Big/\left(\mathbb{S}_{2k}^{\z}\right)_{\mathbb{S}_{2k}^{\z}}^{\perp}\perp \left(\mathbb{S}_{2k}^{\z}\right)_{\mathbb{S}_{2k}^{\z}}^{\perp}.
\end{align*}
Since 
$
(\mathbb{S}_{2k}^{\z})_{\mathbb{D}_{2k}^{\z}}^{\perp}\subseteq (\mathbb{S}_{2k}^{\z})_{\mathbb{S}_{2k}^{\z}}^{\perp}, 
$
we have 
\[
\dim_{\C}\left( \left(\mathbb{E}_{2k}^{\z} \oplus \mathbb{D}_{2k}^{\z}\right)\Big/\left(\mathbb{S}_{2k}^{\z}\right)_{\mathbb{S}_{2k}^{\z}}^{\perp}\right)\leq \dim_{\C}\left(\mathbb{E}_{2k}^{\z} \oplus \mathbb{D}_{2k}^{\z}\Big/\left(\mathbb{S}_{2k}^{\z}\right)_{\mathbb{D}_{2k}^{\z}}^{\perp}\right).
\]
To show that $\mathbb{S}_{2k}^{\z}/(\mathbb{S}_{2k}^{\z})_{\mathbb{S}_{2k}^{\z}}^{\perp}$ is finite-dimensional, it therefore suffices to show that 
$
\mathbb{E}_{2k}^{\z} \oplus \mathbb{D}_{2k}^{\z}/(\mathbb{S}_{2k}^{\z})_{\mathbb{D}_{2k}^{\z}}^{\perp}
$
is finite-dimensional. Since 
\[
\mathbb{E}_{2k}^{\z}=\bigoplus_{n=1-2k}^{-1} \C\Psi_{2k,n}^{\z},
\]
we see directly that $\mathbb{E}_{2k}^{\z}$ is finite-dimensional and we only need to show that \linebreak 
$
\mathbb{D}_{2k}^{\z}/(\mathbb{S}_{2k}^{\z})_{\mathbb{D}_{2k}^{\z}}^{\perp}
$
is finite-dimensional. By Corollary \ref{cor:Hecke} (1), we have
\begin{align*}
\left(\mathbb{S}_{2k}^{\z}\right)_{\mathbb{D}_{2k}^{\z}}^{\perp}&=\left\{ f\in D^{2k-1}\left(\mathbb{S}_{2-2k}^{\z}\right): f=D^{2k-1}(F),\ F\in\mathbb{S}_{2-2k}^{\varrho}, p_{\mathfrak{F}_{2-2k}(F),\z}(z)=0\right\}=:\mathbb{J}_{2k,\z}^{\z}.
\end{align*}
By taking the intermediary quotient with $(\mathbb{D}_{2k}^{\z})_{\mathbb{D}_{2k}^{\z}}^{\perp}$, we have 
\[
\dim_{\C}\left(\mathbb{D}_{2k}^{\z}\Big/\mathbb{J}_{2k,\z}^{\z}\right)= \dim_{\C}\left( \mathbb{D}_{2k}^{\z}\Big/\left(\mathbb{D}_{2k}^{\z}\right)_{\mathbb{D}_{2k}^{\z}}^{\perp}\right)+ \dim_{\C}\left(\left(\mathbb{D}_{2k}^{\z}\right)_{\mathbb{D}_{2k}^{\z}}^{\perp}\Big/\mathbb{J}_{2k,\z}^{\z}\right).
\]
Since 
$
(\mathbb{D}_{2k})_{\mathbb{D}_{2k}}^{\perp}\cap \mathbb{D}_{2k}^{\z}\subseteq (\mathbb{D}_{2k}^{\z})_{\mathbb{D}_{2k}^{\z}}^{\perp},
$
we have 
\[
\dim_{\C}\left( \mathbb{D}_{2k}^{\z}\Big/\left(\mathbb{D}_{2k}^{\z}\right)_{\mathbb{D}_{2k}^{\z}}^{\perp}\right)\leq  \dim_{\C}\left( \mathbb{D}_{2k}\Big/\left(\mathbb{D}_{2k}\right)_{\mathbb{D}_{2k}}^{\perp}\right)=\dim_{\C}\left(S_{2k}\right)
\]
is finite (and indeed, the proof of Corollary \ref{cor:Hecke} (1) implies that the dimensions agree).

Finally, as shown in Corollary \ref{cor:Hecke} (1), 
$
(\mathbb{D}_{2k}^{\z})_{\mathbb{D}_{2k}^{\z}}^{\perp}=D^{2k-1}(\mathbb{S}_{2-2k}^{\z})
$
and we see that 
\[
 \dim_{\C}\left(\left(\mathbb{D}_{2k}^{\z}\right)_{\mathbb{D}_{2k}^{\z}}^{\perp}\Big/\mathbb{J}_{2k,\z}^{\z}\right)= \dim_{\C}\left(D^{2k-1}\left(\mathbb{S}_{2-2k}^{\z}\right)\Big/\mathbb{J}_{2k,\z}^{\z}\right)\leq 2k-1
\]
because the polynomial part is a polynomial of degree at most $2k-2$ and hence there are at most $2k-1$ linearly-independent possible polynomial parts. 
We therefore conclude that $\mathbb{S}_{2k}^{\z}/(\mathbb{S}_{2k}^{\z})_{\mathbb{S}_{2k}^{\z}}^{\perp}$ is finite-dimensional. 

\noindent The argument that $\mathbb{D}_{2k}/\left(\mathbb{S}_{2k}^{\z}\right)_{\mathbb{D}_{2k}}^{\perp}$ is finite-dimensional is similar. By Corollary \ref{cor:Hecke} (2)
\begin{align*}
\left(\mathbb{S}_{2k}^{\z}\right)_{\mathbb{D}_{2k}}^{\perp}&= \left\{ f\in D^{2k-1}\left(\mathbb{S}_{2-2k}\right): f=D^{2k-1}(F),\ F\in\mathbb{S}_{2-2k}, p_{\mathfrak{F}_{2-2k}(F),\z}(z)=0\right\}=:\mathbb{J}_{2k,\z}.
\end{align*}
Thus we want to show that 
$
\mathbb{D}_{2k}/\mathbb{J}_{2k,\z} 
$
is finite-dimensional. By taking the intermediary quotient with $(\mathbb{D}_{2k})_{\mathbb{D}_{2k}}^{\perp}=D^{2k-1}(\mathbb{S}_{2-2k})$ (using Corollary \ref{cor:Dperp}), we have
\[
\dim_{\C}\left(\mathbb{D}_{2k}\Big/\mathbb{J}_{2k,\z}\right) =\dim_{\C}\left( \mathbb{D}_{2k}/\left(\mathbb{D}_{2k}\right)_{\mathbb{D}_{2k}}^{\perp}\right) + \dim_{\C}\left( D^{2k-1}\left(\mathbb{S}_{2-2k}\right)\Big/\mathbb{J}_{2k,\z}\right).
\]
By Corollary \ref{cor:DperpFD}, we have 
\[
\dim_{\C}\left( \mathbb{D}_{2k}\Big/\left(\mathbb{D}_{2k}\right)_{\mathbb{D}_{2k}}^{\perp}\right)=\dim_{\C}\left(S_{2k}\right)<\infty,
\qquad
\dim_{\C}\left( D^{2k-1}\left(\mathbb{S}_{2-2k}\right)\Big/\mathbb{J}_{2k,\z}\right)\leq 2k-1
\]
because the polynomial part has at most $2k-1$ linearly-independent choices. 
\end{proof}
We next investigate to what extent the restriction $\xi_{2-2k}(F),\xi_{2-2k}(G)\in \mathbb{D}_{2k}\perp S_{2k}$ is necessary in Theorem \ref{thm:xiDinner}. We begin by reducing the first statement of Proposition \ref{prop:EperpIntro} to the second statement in Proposition \ref{prop:EperpIntro}.

\begin{lemma}\label{lem:NoExtend}
If Theorem \ref{thm:xiDinner} were to hold for arbitrary $F,G\in\mathscr{H}_{2-2k}$ with $\xi_{2-2k}(F)\in \mathbb{E}_{2k}$ and $\xi_{2-2k}(G)\in S_{2k}$, then $\mathbb{D}_{2k}$ would be orthogonal to $\mathbb{E}_{2k}$. 
\end{lemma}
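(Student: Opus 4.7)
The plan is to exhibit, for arbitrary $e \in \mathbb{E}_{2k}$ and $d \in \mathbb{D}_{2k}$, lifts $F, G \in \mathscr{H}_{2-2k}$ under $D^{2k-1}$ whose images under $\xi_{2-2k}$ land in the spaces allowed by the hypothetical extension, and then read off the orthogonality from the vanishing of the left-hand side. Concretely, first I would invoke Lemma \ref{lem:DXicusp} (3) to pick $F \in \mathscr{H}_{2-2k}^{\mathbb{E}}$ with $D^{2k-1}(F) = e$; by the same part we automatically get $\xi_{2-2k}(F) \in \mathbb{E}_{2k}$. Next I would invoke Lemma \ref{lem:DXicusp} (1), (2) to pick $G \in \mathscr{H}_{2-2k}^{\operatorname{cusp}}$ with $D^{2k-1}(G) = d$, which simultaneously gives $\xi_{2-2k}(G) \in S_{2k}$.

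With these choices the hypothesized extension of Theorem \ref{thm:xiDinner} applies and yields
\[
\left<\xi_{2-2k}(F),\xi_{2-2k}(G)\right> = -\frac{(4\pi)^{4k-2}}{(2k-2)!^2}\left<D^{2k-1}(G),D^{2k-1}(F)\right> = -\frac{(4\pi)^{4k-2}}{(2k-2)!^2}\left<d,e\right>.
\]
The crucial observation is that the left-hand side vanishes: by the very definition of $\mathbb{E}_{2k}$, every element of $\mathbb{E}_{2k}$ is orthogonal to every cusp form, so the pairing of $\xi_{2-2k}(F) \in \mathbb{E}_{2k}$ against $\xi_{2-2k}(G) \in S_{2k}$ is zero. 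Hence $\left<d,e\right> = 0$. Since $e \in \mathbb{E}_{2k}$ and $d \in \mathbb{D}_{2k}$ were arbitrary, this proves $\mathbb{D}_{2k} \perp \mathbb{E}_{2k}$.

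There is essentially no obstacle here: the argument is a direct diagram chase. The only subtlety worth double-checking is that both surjectivity statements from Lemma \ref{lem:DXicusp} are really used (onto $\mathbb{E}_{2k}$ via $D^{2k-1}$ from $\mathscr{H}_{2-2k}^{\mathbb{E}}$, and onto $\mathbb{D}_{2k}$ via $D^{2k-1}$ from $\mathscr{H}_{2-2k}^{\operatorname{cusp}}$), so that every pair $(d,e)$ is realized as $(D^{2k-1}(G), D^{2k-1}(F))$ within the allowed classes. Once those surjectivities are in hand, the conclusion is immediate.
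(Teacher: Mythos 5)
Your proposal is correct and follows essentially the same route as the paper's proof: choose lifts via the surjectivity statements in Lemma \ref{lem:DXicusp} (2), (3), note that $\left<\xi_{2-2k}(F),\xi_{2-2k}(G)\right>=0$ since $\mathbb{E}_{2k}$ is orthogonal to $S_{2k}$ by definition, and conclude $\left<d,e\right>=0$ from the hypothesized extension of Theorem \ref{thm:xiDinner}. The only difference is cosmetic ordering (the paper states the vanishing identity first and then picks the lifts), so nothing further is needed.
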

\begin{proof}
By Lemma \ref{lem:DXicusp} (3), for every $F\in \mathscr{H}_{2-2k}^{\mathbb{E}}$ we have $\xi_{2-2k}(F)\in \mathbb{E}_{2k}$, and by Lemma \ref{lem:DXicusp} (1) for every $G\in \mathscr{H}_{2-2k}^{\operatorname{cusp}}$ we have $\xi_{2-2k}(G)\in S_{2k}$. Since the spaces $\mathbb{E}_{2k}$ and $S_{2k}$ are orthogonal by definition, the extension of Theorem \ref{thm:xiDinner} implies that 
\begin{equation}\label{eqn:flippedE}
0=\left<\xi_{2-2k}(F),\xi_{2-2k}(G)\right> =-\frac{(4\pi)^{4k-2}}{(2k-2)!^2} \left<D^{2k-1}(G),D^{2k-1}(F)\right>.
\end{equation}
Let $f\in \mathbb{E}_{2k}$ and $g\in \mathbb{D}_{2k}$ be given. By the surjectivity in Lemma \ref{lem:DXicusp} (3), there exists $F\in \mathscr{H}_{2-2k}^{\mathbb{E}}$ such that $D^{2k-1}(F)=f$, while the surjectivity in Lemma \ref{lem:DXicusp} (2) implies that there exists $G\in \mathscr{H}_{2-2k}^{\operatorname{cusp}}$ with $D^{2k-1}(G)=g$. Plugging this into \eqref{eqn:flippedE}, we have 
\[
\left<f,g\right>=\left<D^{2k-1}(G),D^{2k-1}(F)\right> = \frac{(2k-2)!^2}{(4\pi)^{4k-2}} \left<\xi_{2-2k}(F),\xi_{2-2k}(G)\right> =0.\qedhere
\]
\end{proof}
We finally compute the space orthogonal to all of $\mathbb{E}_{2k}$, yielding the second claim in Proposition \ref{prop:EperpIntro}.
\begin{proposition}\label{prop:Eperp}
We have
\[
\left(\mathbb{E}_{2k}\right)_{\mathbb{S}_{2k}}^{\perp}=S_{2k}.
\]
In particular, 
\[
\left(\mathbb{E}_{2k}\right)_{\mathbb{D}_{2k}}^{\perp}=\left(\mathbb{E}_{2k}\right)_{\mathbb{E}_{2k}}^{\perp}=\mathbb{S}_{2k}^{\perp}=\{0\},
\]
 and the spaces $\mathbb{E}_{2k}$,  $\mathbb{E}_{2k}\oplus \mathbb{D}_{2k}$, and $\mathbb{S}_{2k}$ are hence non-degenerate.
\end{proposition}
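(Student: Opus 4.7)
The inclusion $S_{2k}\subseteq (\mathbb{E}_{2k})^{\perp}_{\mathbb{S}_{2k}}$ is immediate from the definition of $\mathbb{E}_{2k}$ (whose elements are required to be orthogonal to every cusp form) combined with the Hermitian symmetry of the regularized inner product, so the substantive content is the reverse inclusion. My plan is to take $f\in (\mathbb{E}_{2k})^{\perp}_{\mathbb{S}_{2k}}$, apply \eqref{eqn:S2ksplitting} at each of the finitely many poles of $f$ to write $f=f_S+f'$ with $f_S\in S_{2k}$ and $f'\in \mathbb{E}_{2k}\oplus \mathbb{D}_{2k}$, and then reduce to showing $f'=0$, using that $S_{2k}\subseteq (\mathbb{E}_{2k})^{\perp}_{\mathbb{S}_{2k}}$ already forces $f'$ to be orthogonal to $\mathbb{E}_{2k}$ as well.

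Next, I would use Lemma~\ref{lem:DXicusp}~(2),(3) to lift $f'=D^{2k-1}(F)$ with $F=F_E+F_{\operatorname{cusp}}\in \mathscr{H}^{\mathbb{E}}_{2-2k}\oplus \mathscr{H}^{\operatorname{cusp}}_{2-2k}$. By Lemma~\ref{lem:flip}~(2) the flipped form $\mathfrak{F}_{2-2k}(F)$ satisfies $\xi_{2-2k}(\mathfrak{F}_{2-2k}(F))=\tfrac{(4\pi)^{2k-1}}{(2k-2)!}f'$, so it is a valid lift of $f'$ for Lemma~\ref{lem:innermero}. Pairing against $\Psi^{\z}_{2k,\ell}$ for $\ell\in\{1-2k,\dots,-1\}$ and $\z\in\H$ (a spanning set for $\mathbb{E}_{2k}$), the orthogonality hypothesis $f'\perp\mathbb{E}_{2k}$ then becomes, via Lemma~\ref{lem:innermero}, the identity $c^{+}_{\mathfrak{F}_{2-2k}(F),\z}(n)=0$ for all $0\le n\le 2k-2$ and all $\z\in\H$, i.e.\ $p_{\mathfrak{F}_{2-2k}(F),\z}(z)\equiv 0$ for every $\z$.

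The main step is then to extract $f'=0$ from this vanishing of polynomial parts. For this I would pair the polynomial condition with the companion identity $D^{2k-1}(\mathfrak{F}_{2-2k}(F))=\tfrac{(2k-2)!}{(4\pi)^{2k-1}}\xi_{2-2k}(F)\in \mathbb{E}_{2k}\perp S_{2k}$ from Lemma~\ref{lem:flip}~(3), expand $\mathfrak{F}_{2-2k}(F)$ in the elliptic Poincar\'e basis, and use the explicit coefficient formulas \eqref{eqn:expandPgen} together with Lemmas~\ref{lem:xiDelliptic} and~\ref{lem:PPsiRel} to set up a linear system on the expansion coefficients; vanishing of all polynomial parts eliminates the $\mathscr{H}^{\mathbb{E}}_{2-2k}$-type contribution, while the known image of $D^{2k-1}(\mathfrak{F}_{2-2k}(F))$ in $\mathbb{E}_{2k}\perp S_{2k}$ eliminates the remaining $\mathscr{H}^{\operatorname{cusp}}_{2-2k}$-type contribution, by the linear independence of the Poincar\'e series. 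This forces $\mathfrak{F}_{2-2k}(F)\in\ker\xi_{2-2k}$, which gives $f'=0$.

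The ``in particular'' assertions then follow immediately: since elements of $\mathbb{E}_{2k}$ and $\mathbb{D}_{2k}$ have poles in $\H$ while cusp forms do not, we have $S_{2k}\cap\mathbb{E}_{2k}=S_{2k}\cap\mathbb{D}_{2k}=\{0\}$, so $(\mathbb{E}_{2k})^{\perp}_{\mathbb{E}_{2k}}=(\mathbb{E}_{2k})^{\perp}_{\mathbb{D}_{2k}}=\{0\}$; and any $f\in \mathbb{S}_{2k}^{\perp}$ lies in $(\mathbb{E}_{2k})^{\perp}_{\mathbb{S}_{2k}}=S_{2k}$ and is orthogonal to $S_{2k}$, hence vanishes by the positive-definiteness of the classical Petersson pairing. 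The hard part I expect to be the key technical step: converting ``$p_{\mathfrak{F}_{2-2k}(F),\z}=0$ for every $\z$'' into ``$f'=0$''. This requires a careful coefficient-level argument combining the vanishing of the polynomial parts with the simultaneous control of $\xi_{2-2k}$ and $D^{2k-1}$ on $\mathfrak{F}_{2-2k}(F)$ provided by Lemma~\ref{lem:flip}, and is where the linear independence of the basis Poincar\'e series in $\mathscr{H}^{\mathbb{E}}_{2-2k}\oplus\mathscr{H}^{\operatorname{cusp}}_{2-2k}$ must be invoked decisively.
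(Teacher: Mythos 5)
Your setup is sound and overlaps with the paper's proof up to a point: the reduction via \eqref{eqn:S2ksplitting} to showing that an $f'\in\mathbb{E}_{2k}\oplus\mathbb{D}_{2k}$ orthogonal to $\mathbb{E}_{2k}$ must vanish, the passage to a harmonic lift, and the translation of the orthogonality through Lemma~\ref{lem:innermero} into the vanishing of the coefficients $c^{+}_{\,\cdot\,,\z}(n)$ for $0\le n\le 2k-2$ and every $\z\in\H$ are all correct (the detour through $D^{2k-1}$ and the flipping operator is unnecessary---you may take any $F\in\mathscr{H}_{2-2k}$ with $\xi_{2-2k}(F)=f'$, since $\xi_{2-2k}$ surjects onto $\mathbb{S}_{2k}$---but it is not wrong). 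The genuine gap is precisely the step you yourself flag as ``the hard part'': converting $p_{\mathfrak{F}_{2-2k}(F),\z}\equiv 0$ for all $\z$ into $f'=0$. The mechanism you propose---expanding the lift in the elliptic Poincar\'e basis and invoking linear independence to ``eliminate'' the $\mathscr{H}^{\mathbb{E}}_{2-2k}$- and $\mathscr{H}^{\operatorname{cusp}}_{2-2k}$-components---cannot work as described: the polynomial part is annihilated by both $\xi_{2-2k}$ and $D^{2k-1}$ (the paper stresses exactly this at the end of Section~\ref{sec:elliptic}), so no bookkeeping of the images of the lift under these operators in the Poincar\'e basis constrains it, and the numbers $c^{\z_0,+}_{m,\z}(n)$, $0\le n\le 2k-2$, entering the polynomial parts of the basis elements at varying $\z$ obey no linear-independence statement that isolates a component. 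Likewise, knowing $D^{2k-1}(\mathfrak{F}_{2-2k}(F))\in\mathbb{E}_{2k}\perp S_{2k}$ gives no leverage toward $\xi_{2-2k}(\mathfrak{F}_{2-2k}(F))=0$.

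The missing idea is analytic rather than linear-algebraic, and it shortens the argument considerably: only the single case $\ell=-1$ of Lemma~\ref{lem:innermero} is needed. Evaluating the elliptic expansion \eqref{eqn:elliptic} at $z=\z$ (the nonmeromorphic terms vanish there by \cite[Lemma 5.4]{BKRohrlich}, and $X_{\z}(\z)=0$ kills the meromorphic terms with $n\ge 1$) shows that at any point $\z$ where the lift $F$ is nonsingular one has $F(\z)=(\z-\overline{\z})^{2k-2}c^{+}_{F,\z}(0)$. Hence $\langle\Psi^{\z}_{2k,-1},f'\rangle=0$ for all $\z$ forces $F\equiv 0$ away from its singularities, so $f'=\xi_{2-2k}(F)$ vanishes off a discrete set, and meromorphy gives $f'=0$; this is the paper's proof. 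Your ``in particular'' deductions are fine once this is in place, though the claim that nonzero elements of $\mathbb{E}_{2k}$ and $\mathbb{D}_{2k}$ necessarily have poles is better replaced by the observation that an element of $\mathbb{E}_{2k}\oplus\mathbb{D}_{2k}$ lying in $S_{2k}$ is orthogonal to all of $S_{2k}$ by definition and hence zero by positive-definiteness.
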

\begin{proof}
Suppose for contradiction that $f\in \mathbb{E}_{2k}\oplus \mathbb{D}_{2k}$ is orthogonal to $\mathbb{E}_{2k}$ and let $F$ be given such that $\xi_{2-2k}(F)=f$.  By Lemma \ref{lem:innermero} with $\ell=-1$, we have 
\[
c_{F,\z}^+(0)=0
\]
for every $\z\in\H$. Let $\z$ be any point at which $F$ does not have a singularity. Then taking $z\to \z$ in the elliptic expansion \eqref{eqn:elliptic} and using \cite[Lemma 5.4]{BKRohrlich},  we have 
\[
F(\z)=(\z-\overline{\z})^{2k-2} c_{F,\z}^+(0) = 0.
\]
We conclude that $F(\z)=0$ at $\z\in\H$ where $F$ does not have a singularity. Then $f=\xi_{2-2k}(F)$ satisfies $f(\z)=0$ for all $\z$ where $F$ does not have a pole. Since $f$ is meromorphic, we conclude that $f=0$. Therefore 
$
\left(\mathbb{E}_{2k}\right)_{\mathbb{S}_{2k}}^{\perp}=S_{2k}. 
$
Finally, if $f\in \mathbb{S}_{2k}^{\perp}$, then in particular $f\in (\mathbb{E}_{2k})_{\mathbb{S}_{2k}}^{\perp}=S_{2k}$. Since $S_{2k}$ is positive-definite, $\mathbb{S}_{2k}^{\perp}=\{0\}$. 
\end{proof}

\end{document}